\newtheorem{theorem}{Theorem}[section]
\newtheorem{lemma}{Lemma}[section]
\newtheorem{proposition}{Proposition}[section]
\theoremstyle{definition}
\newtheorem{definition}{Definition}[section]
\newtheorem{example}{Example}[section]
\newtheorem{remark}{Remark}[section]
\numberwithin{equation}{section}
\begin{document}
\setcounter{page}{1}

\vspace*{1.0cm}
\title[A  New Penalty Dual-Primal Augmented Lagrangian Method]
{A  New Penalty Dual-Primal Augmented Lagrangian Method and Its Extensions}
\author[J. Liu, X. Ou, J. Chen ]{ Jie Liu$^{1}$,   Xiaoqing Ou$^{1,2}$,  Jiawei Chen$^{1}$}
\maketitle
\vspace*{-0.6cm}

\begin{center}
{\footnotesize {\it

$^1$School of Mathematics and Statistics, Southwest University, Chongqing 400715, China;\\
$^2$College of Management, Chongqing College of Humanities, Science \& Technology, Chongqing 401524, China
}}\end{center}

\vskip 4mm {\small\noindent {\bf Abstract.}
In this paper, we propose a penalty dual-primal augmented lagrangian method for solving convex minimization problems under linear equality or inequality constraints. The proposed method combines a novel penalty technique with updates the new iterates in a dual-primal order, and then be extended to solve multiple-block separable convex programming problems with splitting version and partial splitting version. We establish the convergence analysis for all the introduced algorithm in the lens of variational analysis. Numerical results on the basic pursuit problem and the lasso model are presented to illustrate the efficiency of the proposed method.

\noindent {\bf Keywords.}
Convex minimization; Augmented lagrangian method ; Novel penalty technique ; Dual-primal order ;  Variational analysis. }

\renewcommand{\thefootnote}{}
\footnotetext{ 
E-mail addresses: liujie66j@163.com (J. Liu), ouxiaoqing413@163.com (X. Ou), jwhen713@swu.edu.cn(J. Chen).
\par
Received May 7, 2023; Accepted xxx, xxx. 
\par  The manuscript was first finished in August 2021}

\section{Introduction}

In this paper, we focus on the following convex minimization problem subject to linear equality or inequality constraints:
\begin{eqnarray}\label{1.1}
\min \left \{\theta(x) \,| \,Ax=b\ (or \geq b),\ \ x\in\mathcal{X}\right\},
\end{eqnarray}
where $\theta:x\rightarrow \mathbb{R}$ is a closed proper convex  but not necessarily smooth function; $\mathcal{X}\subset \mathbb{R}^{n}$ is a nonempty closed convex sets; $A \in \mathbb{R}^{m\times n}$ is a given matrix and $b\in \mathbb{R}^{m}$ is a known vector. The problems (\ref{1.1}) is assumed to have solution throughout this paper.

 There are a large number of algorithms that can be used to solve problem (\ref{1.1}), where a benchmark method is the Augmented Lagrangian Method (ALM) which was proposed in \cite{1hes,2pow}. It plays a  significant role in both algorithmic design and practical application for various convex programming problems. We refer to \cite{3ber,4cha,5for,6roc,7bir,8glo,9ito} and the references therein.

In practice, for a given iterate $(x^k,\lambda^k)$, the iterative scheme of the classical ALM reads as

\begin{eqnarray}\label{1.2}
 \left\{ \begin{array}{ll}
x^{k+1} = \arg \min \left\{ \mathcal{L}_{\beta} (x,\lambda^{k}):=\theta(x)-\lambda^{\top}(Ax-b)+\frac{\beta}{2}\|Ax-b\|^{2}  \,|\,  x\in\mathcal{X} \right\},\\
\lambda^{k+1}= \lambda^{k}- \beta (Ax^{k+1}-b),
\end{array} \right.
\end{eqnarray}

where $\beta >0$ denotes the penalty parameter, $\lambda \in \mathbb{R}^{m}$ is the associated Lagrange multiplier. Hereafter, $ x $ and $\lambda$ are referred to the primal and dual variables respectively, and  $I$ and $\mathbf{0}$ are regarded as a identity matrix and a zero matrix with proper dimensions, respectively.

Ignoring some constant terms,the $x$-subproblem of (\ref{1.2}) can be rewritten as

$$x^{k+1} = \theta(x)+\frac{\beta}{2}\|Ax-b-\frac{1}{\beta}\lambda^{k}\|^{2}.$$

 It is obvious that the objective function $\theta$, the coefficient matrix A, and the set $\mathcal{X}$ are all appear at the same time, so it is still difficult to be solved if without utilizing some linearization techniques or inner solvers. Some existing algorithm can be applied to decoupled the objection function $\theta$ and coefficient matrix A, so as to alleviate the $x$-subproblem substantially, such as the linearized ALM \cite{10he} and primal-dual method \cite{11cha}. In above-mentioned algorithm, the $x$-subproblem only depends on $\theta$ and $\mathcal{X}$, and the proximity operator of the objective function $\theta$, which is defined as

 $$Prox_{\theta}^{\beta}(x):= \arg \min \left\{\theta(y)+\frac{\beta}{2} \|y-x\|^2\ |y \in\mathbb{R}^{n}\right\},\ \ \forall x \in \mathbb{R}^{n},\ \  \forall \beta >0, $$
 has a closed-form representation . But in order to ensure convergence, there is an extra restriction on step-size , i.e.,   $\sigma>\beta\|A^\top A\|$, where $\sigma>0$ and $\|A^\top A\|$ represent the spectral norm of $A^\top A$. Hence the step-size for solving (\ref{1.2}) becomes small when $\|A^\top A\|$ is too large, and the convergence may not be guaranteed consequently \cite{12he}. Recently, a balanced version of ALM was proposed in \cite{12he}, which has no limitation on step-size and takes the following iterative scheme:

\begin{eqnarray}\label{1.3}
{\rm (Balanced\  ALM)}\, \left\{ \begin{array}{ll}
x^{k+1}= \arg \min  \left\{ \theta(x)+\frac{1}{\beta}{\|x-(x^k+\frac{1}{\beta})A^\top \lambda^{k}\|}^2 \,|\,  x\in\mathcal{X}  \right\},\\
\lambda^{k+1}= \lambda^{k}- \left(\frac{1}{\beta} AA^\top +\delta I_m\right)^{-1} \left\{A(2x^{k+1}-x^{k})-b \right\},
\end{array} \right.
\end{eqnarray}

In which $\beta>0$ and $\delta>0$. It is clear that the $x$-subproblem of the balanced ALM decouples the objective function and the coefficient matrix without any extra condition. Namely, the parameter $\beta$ does not depend on $\|A^\top A\|$, and the $x$-subproblem  have a closed-form solution since its solution can be expressed as a proximal mapping. However, balanced ALM will take much time to update $\lambda^{k+1}$, and in practice it will take an inner solver to tackle the dual subproblem or use the well-known Cholesky factorization to deal with an equivalent linear equation of dual problem. In this sence, a new Penalty ALM was proposed in \cite{13bai} to solve it, which reads:

\begin{eqnarray}\label{1.4}
 \left\{ \begin{array}{ll}
x^{k+1}= \arg \min  \left\{ \theta(x)-\langle \lambda^{k}, Ax-b \rangle +\frac{\beta}{2}{\|A(x-x^k)\|}^2+\frac{1}{2}\|x-x^k\|_Q^2 \,|\,  x\in\mathcal{X}  \right\},\\
\lambda^{k+1}= \lambda^{k}-\beta [A(2x^{k+1}-x^{k})-b],
\end{array} \right.
\end{eqnarray}
where $\beta>0$, $Q\succ0$ is an arbitrarily given positive-defined matrix; the quadratic terms $\frac{\beta}{2}{\|A(x-x^k)\|}^2$ can be treated as a penalty term, while the quadratic  terms $\frac{1}{2}\|x-x^k\|_Q^2$ can be regarded as a matrix proximal term.

Both the balanced ALM and the new Penalty ALM update the new iterate by a primal-dual order. Exploiting the variational inequality structure of the  balanced ALM, a dual-primal version of the balanced ALM was proposed in \cite{14xu}. The novel proposed method generates the new iterates by a dual-primal order and enjoys the same computational difficulty with the original primal-dual balanced ALM, which reads

\begin{eqnarray}\label{1.5}
 \left\{ \begin{array}{ll}
\lambda^{k+1}= \lambda^{k}-\left(\frac{1}{\beta} AA^\top +\delta I_m\right)^{-1}(Ax^{k}-b),\\
x^{k+1}= \arg \min  \left\{ \theta(x)+\frac{\beta}{2}{\left\|x-\left\{x^{k}+\frac{1}{\beta}A^\top(2\lambda^{k+1}-\lambda^{k})\right\}\right\|}^2 \,|\,  x\in\mathcal{X}  \right\},
\end{array} \right.
\end{eqnarray}
where $\beta>0$ and $\delta>0$.

 It is clear that the original primal-dual balanced ALM also will take much time to update $\lambda^{k+1}$, and in practice it will take an inner solver  or use the well-known Cholesky factorization to deal with an equivalent linear equation of dual problem the same as the balanced ALM. So our main purpose is to alleviate the difficulty for solving dual-subproblem of the original primal-dual balanced ALM (\ref{1.5}) by utilizing the novel penalty technique proposed in \cite{13bai}. Inspired by the works \cite{13bai,14xu}, this paper propose a penalty dual-primal ALM combines a novel penalty technique with updates the new iterates in a dual-primal order, as follows:
\begin{center}
\fbox{%
\parbox{\textwidth}
{
{\bfseries Algorithm 1: the novel penalty dual-primal ALM }\\
Let $\beta>0$ and $Q\succ0$ be an arbitrarily given positive-defined matrix. Then the new iterate $(x^{k+1},\lambda^{k+1})$ is generated with $(x^k,\lambda^k)$ via the following steps:

\begin{eqnarray}\label{1.6}
 \left\{ \begin{array}{ll}
\lambda^{k+1}= \lambda^{k}-\beta(Ax^{k}-b),\\
x^{k+1}= \arg \min  \left\{ \theta(x)-\langle 2\lambda^{k+1}-\lambda^k,Ax-b \rangle +\frac{\beta}{2}{\|A(x-x^k)\|}^2+\frac{1}{2}\|x-x^k\|_Q^2 \,|\,  x\in\mathcal{X}  \right\}.
\end{array} \right.
\end{eqnarray}
}
}
\end{center}

Where the quadratic terms $\frac{\beta}{2}{\|A(x-x^k)\|}^2$ can be treated as a penalty term, while the quadratic  term $\frac{1}{2}{\|x-x^k\|_Q}^2$ can be regarded as a matrix proximal term.

 It is clear that the $x$-subproblem can be written equivalently as
 $$x^{k+1}= \arg \min  \left\{ \theta(x)-\langle 2\lambda^{k+1}-\lambda^k,Ax-b \rangle +\frac{1}{2}\|x-x^k\|_{\beta A^\top A+Q}^2 \,|\,  x\in\mathcal{X}  \right\}.$$

In particular,
\begin{enumerate}
  \item when talking $Q=\tau I-\beta A^\top A$ with $\tau >\beta \|A^\top A\|$, could convert the $x$-update to
$$x^{k+1}=\arg \min  \left\{ \theta(x) +\frac{\tau}{2}\|x-x^k-\frac{1}{\tau}A^\top(2\lambda^{k+1}-\lambda^k)\|^2 \,|\,  x\in\mathcal{X}  \right\};$$
  \item when talking $Q=\beta(\tau I- A^\top A$) with $\tau > \|A^\top A\|$, could convert the $x$-update to
$$x^{k+1}=\arg \min  \left\{ \theta(x) +\frac{\tau \beta}{2}\|x-x^k-\frac{1}{\tau}A^\top(2\lambda^{k+1}-\lambda^k)\|^2 \,|\,  x\in\mathcal{X} \right\},$$
\end{enumerate}
which have a closed-form solution by proximity operator of the objective function $\theta(x)$.

The dual update of (\ref{1.6}) is similar to \cite{11cha} and is comparatively much easier than that of the dual-primal balanced ALM. As said before, the global convergence of this penalty dual-primal ALM (\ref{1.6}), compared with some existing splitting algorithms, will no longer depend on the value of $\|A^\top A\|$. We also raise the extension versions of the proposed penalty dual-primal ALM (\ref{1.6}) to tackle the multi-block separable convex minimization problem with both linear equality and inequality constraints.

The paper is organized as follows. In Section 2, we recall some preliminaries which are used in the sequel. In Section 3, we establish the convergence analysis of the penalty dual-primal ALM. We extend the proposed method to solve the multiple-block separable convex problems and show its convergence analysis in Section 4. In Section 5, we further establish the partial splitting version and its convergence analysis. In Section 6, we  present some computational experiments. Finally, we show our conclusions in Section 7.

\section{Preliminaries}

In this section, we recall some fundamental variational inequality characterization and fundamental lemma to simplify the convergence analysis of the proposed novel penalty dual-primal ALM (\ref{1.6}).

In what follows, $\mathbb{R}^{n}$ will stand for the $n$-dimensional Euclidean space, $\langle x, y \rangle=x^\top y=\sum_{i=1}^n x_iy_i$,\ \ $\|x\|=\sqrt{\langle x, x\rangle},$ where $x,y \in \mathbb{R}^{n}$ and $\top$ stands for the transpose operation.

We first derive the optimality condition of the model (\ref{1.1}) in the lens of variational inequality (more detailed introduction refer to, e.g. \cite{15gu,16he,17he} ). The Lagrangian function of model (\ref{1.1}) is defined as
\begin{eqnarray}\label{2.1}
\mathcal{L}(x,\lambda):=\theta(x)-\lambda^{\top}(Ax-b),
\end{eqnarray}

with $\lambda \in \mathbb{R}^{m}$ is the Lagrange multiplier. To take into account both linear equality and inequality constraints in (\ref{1.1}),we define
\begin{eqnarray*}
\Omega:= \mathcal{X} \times \Lambda,\, \,  \mbox{where}\,  \Lambda:=
 \left\{ \begin{array}{ll}
\mathbb{R}^{m},\, \,  \mbox{if} \, \, Ax=b,\\
\mathbb R_+^{m},\, \, \mbox{if} \, \, Ax\geq b.
\end{array} \right.
\end{eqnarray*}

The pair $(x^\ast,\lambda^\ast)\in \Omega$ is called a saddle point of Lagrangian function \eqref{2.1} which means it is a solution point of \eqref{1.1}, if it satisfies

\begin{eqnarray*}
\begin{aligned}
\mathcal{L}_{\lambda \in \Lambda}(x^\ast,\lambda) \leq \mathcal{L}(x^\ast,\lambda^\ast) \leq \mathcal{L}_{x \in \mathcal{X}}(x,\lambda^\ast).
\end {aligned}
\end{eqnarray*}

Which can be separately rewritten as the following variational inequalities:

\begin{equation*}
\begin{aligned}
\left\{ \begin{array}{ll}
x^\ast \in \mathcal{X},\ \ \theta(x)-\theta(x^\ast)+(x-x^\ast)^\top(-A^\top\lambda^\ast)\geq 0,\ \forall x \in \mathcal{X},\\
\lambda^\ast \in \Lambda, \ \ \ (\lambda-\lambda^\ast)^\top(Ax^\ast-b)\geq 0,\ \forall \lambda \in \Lambda.
\end{array} \right.
\end {aligned}
\end{equation*}

Which can be further written as the following compact format:

\begin{equation}\label{2.2}
\begin{aligned}
\omega^\ast \in \Omega,\ \ \ \theta(x)-\theta(x^\ast)+(\omega-\omega^\ast)^\top F(\omega^\ast) \geq 0,\ \ \forall \omega \in \Omega,
\end {aligned}
\end{equation}

with
\begin{equation}\label{2.3}
\omega=\left(\begin{matrix} x\\ \lambda \end{matrix}\right),\ \ F(\omega)=\left(\begin{matrix} -A^\top \lambda\\ Ax-b \end{matrix}\right)\ \
and\ \ \Omega=\mathcal{X} \times \Lambda.
\end{equation}

Since the operator $F(\omega)$ defined in (\ref{2.3}) is affine with a skew-symmetric matrix and thus satisfies

\begin{equation}\label{2.4}
(\omega-\widetilde{\omega})^\top(F(\omega)-F(\widetilde{\omega}))\equiv 0\ \ \ \ \forall \omega, \widetilde{\omega} \in \Omega.
\end{equation}

We denote by $\Omega^\ast$ the solution set of the variational inequality (\ref{2.3},\ref{2.4}), which is also the saddle points of the Lagrangian function (\ref{2.1}) and the solution set of the model (\ref{1.1}).

The following basic lemma will be used frequently for our further discussions,which proof is elementary and can be found in, e.g. \cite{18bec}.

\begin{lemma}\label{lem2.1}
Let $x\in\mathcal{X}$ be a closed convex set, $\theta(x)$ and $f(x)$ are convex functions. If $f$ is differentiable, and the solution set of the minimization problem
$$\min\left\{\theta(x)+f(x)|\ x\in\mathcal{X}\right\},$$

is nonempty, then it holds that

$$x^\ast \in \arg \min \left\{\theta(x)+f(x)|\ x\in\mathcal{X}\right\}$$

if and only if

$$x^\ast \in\mathcal{X},\ \  \theta(x)-\theta(x^\ast)+(x-x^\ast)^\top\nabla f(x^\ast) \geq 0,\ \ \forall x \in \mathcal{X}.$$
\end{lemma}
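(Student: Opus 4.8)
The plan is to establish the two implications of the equivalence separately, using only convexity of $\theta$, convexity and differentiability of $f$, and convexity of the feasible set $\mathcal{X}$; no deeper machinery is required.

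For the ``if'' direction, I would assume $x^\ast \in \mathcal{X}$ satisfies the stated variational inequality and invoke the gradient inequality for the convex differentiable function $f$, namely $f(x) \geq f(x^\ast) + (x-x^\ast)^\top \nabla f(x^\ast)$ for every $x \in \mathcal{X}$. Adding $\theta(x)$ to both sides and then applying the assumed inequality $\theta(x) - \theta(x^\ast) + (x-x^\ast)^\top \nabla f(x^\ast) \geq 0$ yields $\theta(x) + f(x) \geq \theta(x^\ast) + f(x^\ast)$ for all $x \in \mathcal{X}$, which is precisely the statement that $x^\ast$ minimizes $\theta + f$ over $\mathcal{X}$.

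For the ``only if'' direction, I would assume $x^\ast$ is a minimizer and fix an arbitrary $x \in \mathcal{X}$. The key device is to test optimality along the segment $x_t := x^\ast + t(x - x^\ast) = (1-t)x^\ast + tx$, which lies in $\mathcal{X}$ for all $t \in (0,1]$ by convexity. Optimality gives $\theta(x_t) + f(x_t) \geq \theta(x^\ast) + f(x^\ast)$; bounding $\theta(x_t) \leq (1-t)\theta(x^\ast) + t\theta(x)$ by convexity of $\theta$ and rearranging leaves $\theta(x) - \theta(x^\ast) + \tfrac{1}{t}\bigl(f(x_t) - f(x^\ast)\bigr) \geq 0$. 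Letting $t \downarrow 0$ and recognizing the difference quotient as the directional derivative of $f$ at $x^\ast$ along $x - x^\ast$, which equals $(x - x^\ast)^\top \nabla f(x^\ast)$ since $f$ is differentiable, produces the asserted inequality.

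The argument is routine; the only point deserving care is the passage to the limit $t \downarrow 0$, where differentiability of $f$ (not merely convexity) is essential to identify the limit of the difference quotient with $(x-x^\ast)^\top\nabla f(x^\ast)$. Note also that $\theta$ is handled entirely through the convexity inequality $\theta(x_t)\leq (1-t)\theta(x^\ast)+t\theta(x)$, so no smoothness of $\theta$ is needed, and the nonemptiness of the solution set assumed in the hypothesis is used only to guarantee that the equivalence is not vacuous.
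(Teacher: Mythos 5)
Your proof is correct. The paper itself gives no proof of this lemma---it simply declares the argument elementary and points to the reference \cite{18bec}---and what you have written is precisely the standard argument that such references supply: the gradient inequality $f(x)\geq f(x^\ast)+(x-x^\ast)^\top\nabla f(x^\ast)$ for the ``if'' direction, and the segment $x_t=(1-t)x^\ast+tx$ together with convexity of $\theta$ and the identification of $\lim_{t\downarrow 0}\frac{1}{t}\bigl(f(x_t)-f(x^\ast)\bigr)$ with the directional derivative for the ``only if'' direction. Both implications are carried out cleanly, and you correctly isolate where differentiability of $f$ is actually needed. The only point worth a passing remark is that $\theta$ in this paper is a closed proper convex function and may be extended-real-valued, so in the ``only if'' direction one should tacitly restrict to $x$ with $\theta(x)<+\infty$ (the asserted inequality being vacuous otherwise); this does not affect the validity of your argument.
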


We recall the so called quasi-Fej\'{e}r convergence theorem, which will be used in our convergence analysis.
\begin{definition}\label{plusdef1}
A sequence $ \left\{ u^k \right\} \subset \mathbb{R}^{n} $ is called quasi-Fej\'{e}r convergent to a nonempty set $ U\subset\mathbb{R}^{n}$
if, for every $u \in U $, there exists a sequence $ \left\{\epsilon_k \right\} \subset \mathbb{R}_{+} $ such that
$$\|u^{k+1}-u\|^2 \leq \|u^{k}-u\|^2+\epsilon_k, $$
with $\sum_{k=0}^\infty < \infty. $
\end{definition}

\begin{lemma}\label{pluslem1}\cite[Theorem 1]{19plus}
If $ \left\{ u^k \right\} \subset \mathbb{R}^{n} $ is a quasi-Fej\'{e}r convergent sequence to a nonempty set $ U$, then $ \left\{ u^k \right\} $
is bounded. Furthermore, if a cluster point $\bar{u}$ of $ \left\{ u^k \right\} $ belongs to $U$, then $\lim \limits_{k\to \infty} u^k=\bar{u}$.
\end{lemma}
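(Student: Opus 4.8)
The plan is to argue directly from Definition \ref{plusdef1}, extracting for each fixed target point a scalar sequence that is first shown to be bounded and then shown to be convergent. First I would fix an arbitrary $u\in U$ and invoke the definition to obtain a nonnegative sequence $\{\epsilon_k\}$ with $\sum_{k=0}^{\infty}\epsilon_k<\infty$ and $\|u^{k+1}-u\|^2\le\|u^k-u\|^2+\epsilon_k$ for all $k$. Telescoping this inequality from $0$ to $k-1$ gives $\|u^k-u\|^2\le\|u^0-u\|^2+\sum_{j=0}^{\infty}\epsilon_j$, and since the right-hand side is a finite constant independent of $k$, the entire sequence $\{u^k\}$ stays inside a fixed ball centered at $u$; this proves boundedness.

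For the second assertion I would first upgrade boundedness of the scalar sequence $a_k:=\|u^k-u\|^2$ to genuine convergence, for every $u\in U$. The device is the auxiliary sequence $b_k:=a_k+\sum_{j=k}^{\infty}\epsilon_j$, which is well defined because the tail of a convergent series converges. A one-line computation using $a_{k+1}\le a_k+\epsilon_k$ yields $b_{k+1}\le b_k$, so $\{b_k\}$ is nonincreasing and bounded below by $0$, hence it has a limit; because $\sum_{j\ge k}\epsilon_j\to0$, the sequence $a_k=b_k-\sum_{j\ge k}\epsilon_j$ has the same limit. Therefore $\lim_{k\to\infty}\|u^k-u\|$ exists for each $u\in U$.

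Finally, assuming some cluster point $\bar u$ of $\{u^k\}$ lies in $U$, I would apply the previous step with $u=\bar u$: the limit $L:=\lim_{k\to\infty}\|u^k-\bar u\|$ exists. Along a subsequence $u^{k_i}\to\bar u$ this norm tends to $0$, so $L=0$, which means $u^k\to\bar u$ as $k\to\infty$, completing the proof.

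The only step carrying any real content is the implication that $a_{k+1}\le a_k+\epsilon_k$ with summable $\epsilon_k$ forces $\{a_k\}$ to converge; the telescoping estimate and the passage to the cluster point are routine. The auxiliary monotone sequence $b_k$ is the standard trick that resolves this, so I do not anticipate a genuine obstacle.
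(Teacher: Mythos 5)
Your proof is correct and complete. The paper itself gives no proof of this lemma---it is simply quoted from the cited reference \cite[Theorem 1]{19plus}---and your argument (telescoping for boundedness, the monotone auxiliary sequence $b_k=a_k+\sum_{j\ge k}\epsilon_j$ to get convergence of $\|u^k-u\|$ for each $u\in U$, then evaluating the limit along the subsequence converging to $\bar u$) is exactly the standard proof one finds in that source, so there is nothing to compare beyond noting that you have supplied the omitted details correctly.
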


\section{Convergence  analysis}

In this section, we establish the convergence analysis of the introduced penalty dual-primal  ALM, following the analogous analysis method in \cite{12he}.
We prove the first lemma which plays a key role in analysis of the convergence as follows.
\begin{lemma}\label{lem3.1}
Let $\left\{\omega^{k}=(x^{k},\lambda^{k})\right\}$ be the sequence generated by the penalty dual-primal balanced ALM (\ref{1.6}). Then we have

\begin{equation}\label{3.1}
\begin{aligned}
   \omega^{k+1}\in\Omega, \theta(x)-\theta(x^{k+1})+(\omega-\omega^{k+1})^\top F(\omega^{k+1}) \geq (\omega-\omega^{k+1})^\top H(\omega^k-\omega^{k+1}),\ \forall\omega \in \Omega,
\end {aligned}
\end{equation}

 where
 \begin{equation}\label{3.2}
\begin{aligned}
 H=\left(\begin{matrix}\beta A^\top A+Q & -A^\top \\ -A & \frac{1}{\beta}I \end{matrix}\right).
\end{aligned}
\end{equation}
\end{lemma}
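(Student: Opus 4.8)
The plan is to derive the variational inequality \eqref{3.1} directly from the two update steps of Algorithm~1, by writing down the optimality condition of the $x$-subproblem via Lemma~\ref{lem2.1} and the optimality condition of the $\lambda$-step, and then packaging both into the compact $\omega$-notation. First I would apply Lemma~\ref{lem2.1} to the $x$-subproblem in \eqref{1.6}: here $f(x) = -\langle 2\lambda^{k+1}-\lambda^k, Ax-b\rangle + \frac12\|x-x^k\|_{\beta A^\top A+Q}^2$, so $\nabla f(x^{k+1}) = -A^\top(2\lambda^{k+1}-\lambda^k) + (\beta A^\top A+Q)(x^{k+1}-x^k)$. The lemma then gives, for all $x\in\mathcal X$,
\begin{equation*}
\theta(x)-\theta(x^{k+1}) + (x-x^{k+1})^\top\bigl(-A^\top\lambda^{k+1}\bigr) \geq (x-x^{k+1})^\top\Bigl[(\beta A^\top A+Q)(x^k-x^{k+1}) + A^\top(\lambda^{k+1}-\lambda^k)\Bigr],
\end{equation*}
where I have rewritten $-A^\top(2\lambda^{k+1}-\lambda^k) = -A^\top\lambda^{k+1} - A^\top(\lambda^{k+1}-\lambda^k)$ and moved the term $-A^\top(\lambda^{k+1}-\lambda^k)$ to the right-hand side. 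Notice $-A^\top\lambda^{k+1}$ is exactly the first block of $F(\omega^{k+1})$.

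Next I would handle the dual step. From $\lambda^{k+1} = \lambda^k - \beta(Ax^k-b)$ we get $Ax^k - b = -\tfrac1\beta(\lambda^{k+1}-\lambda^k)$. I want to produce the inequality $(\lambda-\lambda^{k+1})^\top(Ax^{k+1}-b) \geq (\lambda-\lambda^{k+1})^\top[\,-A(x^k-x^{k+1}) + \tfrac1\beta(\lambda^k-\lambda^{k+1})\,]$ for all $\lambda\in\Lambda$. To see this, write $Ax^{k+1}-b = A(x^{k+1}-x^k) + (Ax^k-b) = A(x^{k+1}-x^k) - \tfrac1\beta(\lambda^{k+1}-\lambda^k)$, so the left side equals $(\lambda-\lambda^{k+1})^\top[A(x^{k+1}-x^k) - \tfrac1\beta(\lambda^{k+1}-\lambda^k)]$, which is precisely the claimed right-hand side (with equality, in the equality-constrained case where $\Lambda=\mathbb R^m$; in the inequality case one still gets ``$\geq$'' trivially since it is an identity). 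Here $Ax^{k+1}-b$ is the second block of $F(\omega^{k+1})$, and the right-hand side is $(\lambda-\lambda^{k+1})^\top$ times $[-A, \tfrac1\beta I](\omega^k-\omega^{k+1})$.

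Finally I would add the two inequalities. The left-hand sides combine to $\theta(x)-\theta(x^{k+1}) + (\omega-\omega^{k+1})^\top F(\omega^{k+1})$, since $(x-x^{k+1})^\top(-A^\top\lambda^{k+1}) + (\lambda-\lambda^{k+1})^\top(Ax^{k+1}-b) = (\omega-\omega^{k+1})^\top F(\omega^{k+1})$. The right-hand sides combine to $(x-x^{k+1})^\top[(\beta A^\top A+Q)(x^k-x^{k+1}) - A^\top(\lambda^k-\lambda^{k+1})] + (\lambda-\lambda^{k+1})^\top[-A(x^k-x^{k+1}) + \tfrac1\beta(\lambda^k-\lambda^{k+1})]$, which is exactly $(\omega-\omega^{k+1})^\top H(\omega^k-\omega^{k+1})$ with $H$ as in \eqref{3.2} — one just reads off the block structure. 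The main thing to be careful about is bookkeeping of signs when splitting $2\lambda^{k+1}-\lambda^k$ and deciding which cross terms stay on the left (to build $F$) versus move to the right (to build $H$); the skew-symmetric off-diagonal blocks $-A^\top$ and $-A$ of $H$ arise precisely because the ``$2\lambda^{k+1}-\lambda^k$'' correction and the substitution for $Ax^k-b$ each contribute one of them. No estimation or convexity beyond Lemma~\ref{lem2.1} is needed; this is an exact identity-plus-inequality argument, so there is no real ``hard part'' beyond this algebraic care.
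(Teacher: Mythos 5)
Your proposal is correct and follows essentially the same route as the paper's proof: apply Lemma~\ref{lem2.1} to the $x$-subproblem and split $-A^\top(2\lambda^{k+1}-\lambda^k)$ to isolate $-A^\top\lambda^{k+1}$, rewrite the dual update as the identity $Ax^{k}-b+\frac{1}{\beta}(\lambda^{k+1}-\lambda^{k})=0$ and recast it around $Ax^{k+1}-b$, then add the two relations and read off the blocks of $H$. The sign bookkeeping in your right-hand sides matches \eqref{3.2} exactly, so no further changes are needed.
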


\begin{proof}
For the $x$-subproblem in (\ref{1.6}), it follows from Lemma \ref{lem2.1} that $x^{k+1}\in \mathcal{X},$
\begin{equation*}
\begin{aligned}
 \theta(x)-\theta(x^{k+1})+(x-x^{k+1})^\top\left\{-A^\top(2\lambda^{k+1}-\lambda^{k})+(\beta A^\top A+Q)(x^{k+1}-x^{k})\right\} \geq 0, 
\end{aligned}
\end{equation*}
for all $x\in\mathcal{X}$,
which can be rewritten as
\begin{equation}\label{3.3}
\begin{aligned}
&x^{k+1}\in \mathcal{X},\ \ \theta(x)-\theta(x^{k+1})+(x-x^{k+1})^\top(-A^\top\lambda^{k+1}) \\
&\geq (x-x^{k+1})^\top\left\{-A^\top(\lambda^{k}-\lambda^{k+1})+(\beta A^\top A+Q)(x^{k}-x^{k+1})\right\}, \ \ \forall x\in\mathcal{X}.
\end{aligned}
\end{equation}

For the $\lambda$-subproblem in (\ref{1.6}), we have

$$ Ax^{k}-b+\frac{1}{\beta}(\lambda^{k+1}-\lambda^{k})=0 , $$

which implies that
\begin{equation*}
\begin{aligned}
(\lambda-\lambda^{k+1})^\top \left\{Ax^{k+1}-b-A(x^{k+1}-x^{k})+\frac{1}{\beta}(\lambda^{k+1}-\lambda^{k}) \right\} \geq 0,\ \ \ \forall \lambda \in \Lambda,
\end{aligned}
\end{equation*}

which leads to

\begin{equation}\label{3.4}
\begin{aligned}
(\lambda-\lambda^{k+1})^\top(Ax^{k+1}-b) \geq (\lambda-\lambda^{k+1})^\top \left\{-A(x^{k}-x^{k+1})+\frac{1}{\beta}(\lambda^{k}-\lambda^{k+1})\right\},\ \ \ \forall \lambda \in \Lambda.
\end{aligned}
\end{equation}

Combining(\ref{3.3}) and(\ref{3.4}), we have

\begin{equation*}
\begin{aligned}
\theta(x)-\theta(x^{k+1})+\left(\begin{matrix}x-x^{k+1}\\ \lambda-\lambda^{k+1}\end{matrix}\right)^{\top}
\left(\begin{matrix}-A^\top \lambda^{k+1}\\ Ax^{k+1}-b \end{matrix}\right) \geq
\left(\begin{matrix}x-x^{k+1}\\ \lambda-\lambda^{k+1}\end{matrix}\right)^{\top}
\left(\begin{matrix}\beta A^\top A+Q & -A^\top \\ -A & \frac{1}{\beta}I \end{matrix}\right)
\left(\begin{matrix}x^{k}-x^{k+1}\\ \lambda^{k}-\lambda^{k+1} \end{matrix}\right),
\end{aligned}
\end{equation*}
namely,
\begin{equation*}
\begin{aligned}
\theta(x)-\theta(x^{k+1})+(\omega-\omega^{k+1})^\top F(\omega^{k+1}) \geq (\omega-\omega^{k+1})^\top H (\omega^{k}-\omega^{k+1}).
\end{aligned}
\end{equation*}
\end{proof}

Convergence of the penalty dual-primal ALM depends on the positive definiteness of the matrix $H$, and following proposition gives the prove.

\begin{proposition}\label{pro3.1}
The matrix $H$ defined in (\ref{3.2}) is positive definite.
\end{proposition}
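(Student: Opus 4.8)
The plan is to verify positive definiteness directly from the quadratic form $\omega^\top H\omega$ for an arbitrary $\omega=(x,\lambda)\in\mathbb{R}^n\times\mathbb{R}^m$. The matrix $H$ in \eqref{3.2} is manifestly symmetric, since its off-diagonal blocks are $-A^\top$ and $-A$, which are transposes of each other, and its diagonal blocks $\beta A^\top A+Q$ and $\frac{1}{\beta}I$ are symmetric; hence it suffices to show $\omega^\top H\omega>0$ whenever $\omega\neq 0$.

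First I would expand the block product to obtain
$$\omega^\top H\omega = x^\top(\beta A^\top A+Q)x - 2\lambda^\top Ax + \tfrac{1}{\beta}\|\lambda\|^2 = \beta\|Ax\|^2 - 2\lambda^\top Ax + \tfrac{1}{\beta}\|\lambda\|^2 + x^\top Qx .$$
The key step is to recognize the first three terms as a perfect square, namely $\beta\|Ax\|^2-2\lambda^\top Ax+\frac{1}{\beta}\|\lambda\|^2=\big\|\sqrt{\beta}\,Ax-\frac{1}{\sqrt{\beta}}\lambda\big\|^2\ge 0$, which uses $\beta>0$. Consequently
$$\omega^\top H\omega = \Big\|\sqrt{\beta}\,Ax-\tfrac{1}{\sqrt{\beta}}\lambda\Big\|^2 + x^\top Qx \ge 0,$$
so $H$ is at least positive semidefinite.

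To upgrade to strict positivity I would split into two cases according to whether $x=0$. If $x\neq 0$, then $x^\top Qx>0$ because $Q\succ 0$, and hence $\omega^\top H\omega>0$. If $x=0$ but $\omega\neq 0$, then necessarily $\lambda\neq 0$, and $\omega^\top H\omega=\frac{1}{\beta}\|\lambda\|^2>0$ since $\beta>0$. In either case the quadratic form is strictly positive, which establishes $H\succ 0$.

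I do not anticipate any genuine obstacle: the argument reduces to a single completing-the-square identity together with a two-case split, and the standing hypotheses $\beta>0$ and $Q\succ 0$ are precisely what make both cases work. As an alternative one could invoke the Schur-complement criterion: the $(2,2)$ block $\frac{1}{\beta}I$ is positive definite with inverse $\beta I$, and the corresponding Schur complement in $H$ equals $\beta A^\top A+Q-(-A^\top)(\beta I)(-A)=Q\succ 0$, whence $H\succ 0$; but the direct quadratic-form computation is cleaner and entirely self-contained, so that is the route I would write up.
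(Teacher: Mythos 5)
Your proof is correct and follows essentially the same route as the paper: both write $\omega^\top H\omega=\bigl\|\sqrt{\beta}\,Ax-\tfrac{1}{\sqrt{\beta}}\lambda\bigr\|^2+\|x\|_Q^2$ via the rank-one (completing-the-square) decomposition of the block matrix and conclude positivity from $\beta>0$ and $Q\succ 0$. Your explicit case split on $x=0$ versus $x\neq 0$ actually supplies the justification for strict positivity that the paper's proof asserts without comment, so if anything your write-up is slightly more complete.
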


\begin{proof}
Note that
\begin{equation*}
\begin{aligned}
H&=\left(\begin{matrix}\beta A^\top A+Q & -A^\top \\ -A & \frac{1}{\beta}I \end{matrix}\right)\\
 &=\left(\begin{matrix}\beta A^\top A & -A^\top \\ -A & \frac{1}{\beta}I \end{matrix}\right)+\left(\begin{matrix}Q & 0 \\ 0 & 0 \end{matrix}\right) \\
 &=\left(\begin{matrix}-\sqrt{\beta} A^\top \\ \frac{1}{\sqrt{\beta}}I \end{matrix}\right)
 \left(\begin{matrix}-\sqrt{\beta} A & \frac{1}{\sqrt{\beta}}I \end{matrix}\right) +
 \left(\begin{matrix}Q & 0 \\ 0 & 0 \end{matrix}\right).
\end {aligned}
\end{equation*}

Then, for arbitrary $\omega=(x,\lambda)\neq 0$, we have

\begin{equation*}
\begin{aligned}
\omega^\top H \omega =\left\|\frac{1}{\sqrt{\beta}} \lambda-\sqrt{\beta}Ax \right\|^2+\|x\|_Q^2 >0,
\end {aligned}
\end{equation*}

and hence the matrix $H$ is positive definite.

\end{proof}

The following lemma is also the basis of convergence analysis of  the proposed novel penalty dual-primal ALM (\ref{1.6}).

\begin{lemma}\label{lem3.2}
Let $ \left\{\omega^{k}=(x^{k},\lambda^{k})\right\} $ be the sequence generated by the proposed penalty dual-primal balanced ALM (\ref{1.6}). Then, we obtain

\begin{eqnarray}\label{3.5}
\begin{aligned}
\theta(x)-\theta(x^{k+1})+(\omega-\omega^{k+1})^\top F(\omega) &\geq \frac{1}{2}(\|\omega-\omega^{k+1}\|_H^2-\|\omega-\omega^{k}\|_H^2)\\
&+\frac{1}{2}\|\omega^{k}-\omega^{k+1}\|_H^2, \ \forall \omega \in \Omega.
\end {aligned}
\end{eqnarray}
\end{lemma}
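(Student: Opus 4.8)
The plan is to derive \eqref{3.5} from Lemma \ref{lem3.1} by transforming the right-hand side $(\omega-\omega^{k+1})^\top H(\omega^k-\omega^{k+1})$ into a combination of squared $H$-norms, and then by using the skew-symmetry property \eqref{2.4} to replace $F(\omega^{k+1})$ by $F(\omega)$ on the left-hand side. First I would invoke Lemma \ref{lem3.1}, which gives
\begin{equation*}
\theta(x)-\theta(x^{k+1})+(\omega-\omega^{k+1})^\top F(\omega^{k+1}) \geq (\omega-\omega^{k+1})^\top H(\omega^k-\omega^{k+1}),\ \forall\omega\in\Omega.
\end{equation*}
Since $H$ is symmetric and (by Proposition \ref{pro3.1}) positive definite, the standard polarization identity applies: for any vectors $a,b,c$,
\begin{equation*}
(a-b)^\top H(c-b) = \tfrac{1}{2}\bigl(\|a-b\|_H^2 + \|c-b\|_H^2 - \|a-c\|_H^2\bigr).
\end{equation*}
Applying this with $a=\omega$, $b=\omega^{k+1}$, $c=\omega^k$ turns the right-hand side into $\tfrac{1}{2}\bigl(\|\omega-\omega^{k+1}\|_H^2 + \|\omega^k-\omega^{k+1}\|_H^2 - \|\omega-\omega^k\|_H^2\bigr)$, which is exactly the right-hand side of \eqref{3.5}.

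Next I would handle the left-hand side. The term on the left of Lemma \ref{lem3.1} involves $F(\omega^{k+1})$, whereas \eqref{3.5} requires $F(\omega)$. I would write
\begin{equation*}
(\omega-\omega^{k+1})^\top F(\omega^{k+1}) = (\omega-\omega^{k+1})^\top F(\omega) + (\omega-\omega^{k+1})^\top\bigl(F(\omega^{k+1})-F(\omega)\bigr),
\end{equation*}
and observe that the last term equals $-(\omega-\omega^{k+1})^\top\bigl(F(\omega)-F(\omega^{k+1})\bigr)$, which vanishes identically by \eqref{2.4} (with $\widetilde\omega=\omega^{k+1}$). Hence $(\omega-\omega^{k+1})^\top F(\omega^{k+1}) = (\omega-\omega^{k+1})^\top F(\omega)$, and the inequality from Lemma \ref{lem3.1} becomes precisely \eqref{3.5} after substituting the polarization identity. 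Combining these two substitutions completes the argument.

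I do not expect any serious obstacle here; the proof is a routine algebraic manipulation combining the variational inequality of Lemma \ref{lem3.1}, the polarization/law-of-cosines identity for the $H$-inner product, and the monotonicity-type identity \eqref{2.4}. The only point requiring a word of care is that the polarization identity relies on $H$ being symmetric, which is clear from its block form \eqref{3.2}, and positive semidefiniteness is not even strictly needed for the identity itself (it is needed elsewhere for convergence). I would state the polarization identity explicitly, apply it, invoke \eqref{2.4} to swap the argument of $F$, and conclude. No additional hypotheses beyond those already in force are required.
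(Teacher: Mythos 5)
Your proposal is correct and follows essentially the same route as the paper: invoke Lemma \ref{lem3.1}, use the skew-symmetry identity \eqref{2.4} to replace $F(\omega^{k+1})$ by $F(\omega)$, and apply the polarization (law-of-cosines) identity for the $H$-inner product with $a=\omega$, $b=\omega^{k+1}$, $c=\omega^{k}$; the paper states the identity in a four-point form with $b=d=\omega^{k+1}$, but it reduces to exactly your three-point version. No gaps.
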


\begin{proof}

Since $F(\omega)$ is affine with a skew-symmetric matrix, like (\ref{2.4}) we have

\begin{equation*}
\begin{aligned}
(\omega-\omega^{k+1})F(\omega^{k+1})=(\omega-\omega^{k+1})F(\omega).
\end {aligned}
\end{equation*}

According to (\ref{3.1}), we have

\begin{equation}\label{3.6}
\begin{aligned}
\omega^{k+1}\in\Omega, \theta(x)-\theta(x^{k+1})+(\omega-\omega^{k+1})^\top F(\omega) \geq (\omega-\omega^{k+1})^\top H(\omega^k-\omega^{k+1}),\ \forall\omega \in \Omega.
\end {aligned}
\end{equation}

Applying the identity

\begin{equation*}
\begin{aligned}
(a-b)^\top H (c-d)=\frac{1}{2}\left\{\|a-d\|_H^2-\|a-c\|_H^2\right\}+\frac{1}{2}\left\{\|c-b\|_H^2-\|d-b\|_H^2\right\},
\end{aligned}
\end{equation*}
to the right-hand side of (\ref{3.6}) with $a=\omega$,\ \ $b=d=\omega^{k+1}$,\ \ $c=\omega^{k}$, then we obtain

\begin{equation}\label{3.7}
\begin{aligned}
(\omega-\omega^{k+1})^\top H(\omega^k-\omega^{k+1})
=\frac{1}{2}\left\{\|\omega-\omega^{k+1}\|_H^2-\|\omega-\omega^{k}\|_H^2\right\}+\frac{1}{2}\|\omega^{k}-\omega^{k+1}\|_H^2.
\end {aligned}
\end{equation}

Combining with (\ref{3.6}) and (\ref{3.7}), then we obtain the conclusion

\begin{equation*}
\begin{aligned}
\theta(x)-\theta(x^{k+1})+(\omega-\omega^{k+1})^\top F(\omega) & \geq \frac{1}{2}(\|\omega-\omega^{k+1}\|_H^2-\|\omega-\omega^{k}\|_H^2) \\
& +\frac{1}{2}\|\omega^{k}-\omega^{k+1}\|_H^2, \ \forall \omega \in \Omega.
\end {aligned}
\end{equation*}
\end{proof}

Combining the two lemmas given above, we can directly obtain the following key theorem.

\begin{lemma}\label{thm3.1}
Let$ \left\{\omega^{k}=(x^{k},\lambda^{k})\right\} $ be the sequence generated by the proposed penalty dual-primal balanced ALM (\ref{1.6}). Then, we have

\begin{equation}\label{3.8}
\begin{aligned}
\|\omega^{k+1}-\omega^\ast\|_H^2 \leq \|\omega^{k}-\omega^{\ast}\|_H^2-\|\omega^{k}-\omega^{k+1}\|_H^2,\ \ \forall \omega^\ast \in \Omega^\ast.
\end {aligned}
\end{equation}
\end{lemma}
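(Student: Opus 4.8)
The plan is to derive \eqref{3.8} by specializing the inequality \eqref{3.5} of Lemma \ref{lem3.2} to an arbitrary solution point $\omega^\ast \in \Omega^\ast$ and then invoking the variational characterization \eqref{2.2} of $\Omega^\ast$. First I would set $\omega = \omega^\ast$ in \eqref{3.5}, which yields
\begin{equation*}
\theta(x^\ast)-\theta(x^{k+1})+(\omega^\ast-\omega^{k+1})^\top F(\omega^\ast) \geq \tfrac{1}{2}\bigl(\|\omega^\ast-\omega^{k+1}\|_H^2-\|\omega^\ast-\omega^{k}\|_H^2\bigr)+\tfrac{1}{2}\|\omega^{k}-\omega^{k+1}\|_H^2.
\end{equation*}
Here I used that $F$ is affine with skew-symmetric matrix, so $(\omega^\ast - \omega^{k+1})^\top F(\omega) = (\omega^\ast - \omega^{k+1})^\top F(\omega^\ast)$ after the substitution (this is exactly the step already carried out at the start of the proof of Lemma \ref{lem3.2}), so the left-hand side can be written with $F$ evaluated at $\omega^\ast$.

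Next I would bound the left-hand side above by zero. Applying \eqref{2.2} with $\omega = \omega^{k+1} \in \Omega$ (recall Lemma \ref{lem3.1} gives $\omega^{k+1} \in \Omega$) gives
\begin{equation*}
\theta(x^{k+1}) - \theta(x^\ast) + (\omega^{k+1}-\omega^\ast)^\top F(\omega^\ast) \geq 0,
\end{equation*}
i.e.\ $\theta(x^\ast) - \theta(x^{k+1}) + (\omega^\ast - \omega^{k+1})^\top F(\omega^\ast) \leq 0$. Substituting this into the displayed inequality above forces
\begin{equation*}
0 \geq \tfrac{1}{2}\bigl(\|\omega^\ast-\omega^{k+1}\|_H^2-\|\omega^\ast-\omega^{k}\|_H^2\bigr)+\tfrac{1}{2}\|\omega^{k}-\omega^{k+1}\|_H^2,
\end{equation*}
and rearranging gives precisely \eqref{3.8}. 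Throughout one should keep in mind that $H$, being positive definite by Proposition \ref{pro3.1}, defines a genuine norm $\|\cdot\|_H$, so the quantities appearing are well-defined and nonnegative.

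There is really no serious obstacle here — the theorem is an immediate corollary once Lemmas \ref{lem3.1}, \ref{lem3.2} and Proposition \ref{pro3.1} are in hand. The only point requiring mild care is the sign bookkeeping: one must correctly pair the direction of the variational inequality \eqref{2.2} (which is stated with test point $\omega$ ranging over $\Omega$ and the solution $\omega^\ast$ fixed) with the substitution $\omega \mapsto \omega^{k+1}$, and simultaneously the substitution $\omega \mapsto \omega^\ast$ in \eqref{3.5} (which has the current iterate fixed and the test point free). Getting the two roles straight is what makes the two $\theta$-difference-plus-inner-product terms cancel with the correct sign so that the cross term drops out and only the three $H$-norm terms remain.
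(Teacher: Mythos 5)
Your argument is correct and is essentially identical to the paper's own proof: both substitute $\omega=\omega^\ast$ into \eqref{3.5} and then use the variational characterization \eqref{2.2} with test point $\omega^{k+1}\in\Omega$ to conclude that the $\theta$-difference plus cross term is nonpositive, leaving only the three $H$-norm terms. (Your remark about invoking skew-symmetry again is superfluous since \eqref{3.5} is already stated with $F$ evaluated at the free test point, but it does no harm.)
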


\begin{proof}
Setting $\omega$ in (\ref{3.5}) as any fixed $\omega^\ast \in \Omega^\ast$, then we get
\begin{equation}\label{3.9}
\begin{aligned}
&\|\omega^{k}-\omega^{\ast}\|_H^2-\|\omega^{k+1}-\omega^\ast\|_H^2-\|\omega^{k}-\omega^{k+1}\|_H^2 \\
& \geq 2\left\{\theta(x^{k+1})-\theta(x^\ast)+(\omega^{k+1}-\omega^\ast)^\top F(\omega^\ast) \right\},\ \forall \omega^\ast \in \Omega^\ast.
\end{aligned}
\end{equation}

Since $\omega^\ast \in \Omega^\ast$ and $\omega^{k+1} \in \Omega^\ast$, according to (\ref{2.2},\ref{2.3}), the right-hand of the inequality (\ref{3.9}) is non-negative, i.e.

\begin{equation*}
\begin{aligned}
\theta(x^{k+1})-\theta(x^\ast)+(\omega^{k+1}-\omega^\ast)^\top F(\omega^\ast) \geq 0.
\end{aligned}
\end{equation*}

This leads to the assertion of this theorem immediately.

\end{proof}

\begin{theorem}\label{plusthe1}
The sequence $ \left\{\omega^{k}=(x^{k},\lambda^{k})\right\} $ generated by the proposed penalty dual-primal balanced ALM (\ref{1.6}) is quasi-Fej\'{e}r
convergent to $\Omega^\ast$.
\end{theorem}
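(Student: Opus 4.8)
The plan is to derive quasi-Fej\'er convergence directly from Lemma \ref{thm3.1} together with the positive definiteness of $H$ established in Proposition \ref{pro3.1}. The statement we must prove is that $\{\omega^k\}$ is quasi-Fej\'er convergent to $\Omega^\ast$ in the sense of Definition \ref{plusdef1}, but note that Definition \ref{plusdef1} and Lemma \ref{pluslem1} are phrased in terms of the Euclidean norm $\|\cdot\|$, whereas Lemma \ref{thm3.1} gives a contraction in the $H$-norm. Since $H$ is symmetric positive definite, the $H$-norm $\|\cdot\|_H$ is equivalent to the Euclidean norm, so this is only a cosmetic issue; I would either invoke the $H$-inner-product version of the quasi-Fej\'er framework (observing that Lemma \ref{pluslem1} holds verbatim with $\|\cdot\|$ replaced by $\|\cdot\|_H$) or translate the $H$-norm inequality back to the Euclidean norm at the cost of the condition number of $H$.

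First I would fix an arbitrary $\omega^\ast \in \Omega^\ast$. Lemma \ref{thm3.1} gives immediately
\begin{equation*}
\|\omega^{k+1}-\omega^\ast\|_H^2 \leq \|\omega^{k}-\omega^\ast\|_H^2 - \|\omega^k-\omega^{k+1}\|_H^2 \leq \|\omega^{k}-\omega^\ast\|_H^2,
\end{equation*}
where the last step uses that $H\succ0$ so $\|\omega^k-\omega^{k+1}\|_H^2 \geq 0$. Thus taking $\epsilon_k \equiv 0$ (which is trivially a nonnegative summable sequence), the defining inequality of Definition \ref{plusdef1} holds with respect to $\|\cdot\|_H$. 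Since $\omega^\ast$ was arbitrary in $\Omega^\ast$, the sequence $\{\omega^k\}$ is quasi-Fej\'er convergent to $\Omega^\ast$ in the $H$-norm, and hence (by norm equivalence) in the Euclidean norm as well, with the modified tolerance sequence $\epsilon_k' \equiv 0$ still summable.

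There is essentially no obstacle here: the content has already been extracted in Lemma \ref{thm3.1} and Proposition \ref{pro3.1}, and the theorem is really just the observation that a monotone $H$-distance decrease is a special (degenerate) case of quasi-Fej\'er convergence. The only point requiring a word of care is the norm in which Definition \ref{plusdef1} is stated; I would add a remark that all the quasi-Fej\'er machinery (Definition \ref{plusdef1} and Lemma \ref{pluslem1}) applies equally with any fixed positive definite matrix norm, since such a norm is equivalent to the Euclidean one, so that the conclusions of Lemma \ref{pluslem1}---boundedness of $\{\omega^k\}$ and convergence to a cluster point lying in $\Omega^\ast$---remain available for the subsequent arguments.
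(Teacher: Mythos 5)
Your proof is correct and is essentially the paper's own argument: the paper likewise takes \eqref{3.8}, drops the nonnegative term $\|\omega^{k}-\omega^{k+1}\|_H^2$, and invokes Definition \ref{plusdef1} with $\epsilon_k\equiv 0$ (tacitly reading the definition in the $H$-norm). One caution on your secondary remark: norm equivalence does \emph{not} let you transfer the inequality to the Euclidean norm while keeping $\epsilon_k'\equiv 0$, since $\|\omega^{k+1}-\omega^\ast\|^2\le \tfrac{c_2}{c_1}\|\omega^{k}-\omega^\ast\|^2$ introduces a multiplicative, not a summable additive, error; the clean fix is the first option you give, namely observing that Definition \ref{plusdef1} and Lemma \ref{pluslem1} hold verbatim in the Hilbert space equipped with the inner product $\langle\cdot,\cdot\rangle_H$, which is what the paper implicitly relies on.
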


\begin{proof}
It follows from (\ref{3.8}) that
\begin{equation}\label{plus1}
\begin{aligned}
\|\omega^{k+1}-\omega^\ast\|_H^2 \leq \|\omega^{k}-\omega^{\ast}\|_H^2,\ \ \forall \omega^\ast \in \Omega^\ast,
\end {aligned}
\end{equation}
by considering (\ref{plus1}) and Definition \ref{plusdef1} with $\varepsilon_k =0 ,\ \ \ k=1, 2, \ldots $, we conclude the proof.
\end{proof}

Base on the establishment of the key contraction in Theorem (\ref{plusthe1}), we can easily prove the convergence of the proposed novel penalty dual-primal  ALM (\ref{1.6}) through the following theorem.

\begin{theorem}\label{thm3.2}
Let$ \left\{\omega^{k}=(x^{k},\lambda^{k})\right\} $ be the sequence generated by the proposed penalty dual-primal balanced ALM (\ref{1.6}) and $H$ be defined in (\ref{3.2}). Then, the sequence $\left\{\omega^{k}\right\} $ converges to some $\bar{\omega} \in \Omega^\ast$.
\end{theorem}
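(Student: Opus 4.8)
The plan is to combine the contraction estimate from Lemma~\ref{thm3.1} with the quasi-Fej\'er machinery of Lemma~\ref{pluslem1}. First, by Theorem~\ref{plusthe1} the sequence $\{\omega^k\}$ is quasi-Fej\'er convergent to $\Omega^\ast$ (with $\varepsilon_k\equiv 0$), so by Lemma~\ref{pluslem1} the sequence $\{\omega^k\}$ is bounded in the $H$-norm, hence bounded in the Euclidean norm since $H\succ 0$ by Proposition~\ref{pro3.1}. Therefore it admits at least one cluster point $\bar\omega$, attained along a subsequence $\{\omega^{k_j}\}$ with $\omega^{k_j}\to\bar\omega$. The remaining work is to show that $\bar\omega\in\Omega^\ast$; once this is done, Lemma~\ref{pluslem1} immediately yields $\lim_{k\to\infty}\omega^k=\bar\omega$, which is the assertion.

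To prove $\bar\omega\in\Omega^\ast$, I would first extract from the contraction inequality~(\ref{3.8}) that the scalar sequence $\{\|\omega^k-\omega^\ast\|_H^2\}$ is nonincreasing and nonnegative, hence convergent, so that summing~(\ref{3.8}) over $k$ gives $\sum_{k=0}^\infty \|\omega^k-\omega^{k+1}\|_H^2 < \infty$; in particular $\|\omega^k-\omega^{k+1}\|_H\to 0$, and by positive definiteness of $H$ this forces $\omega^k-\omega^{k+1}\to 0$. Consequently, along the convergent subsequence we also have $\omega^{k_j+1}\to\bar\omega$. Now pass to the limit along $k=k_j$ in the variational inequality~(\ref{3.1}) of Lemma~\ref{lem3.1}: the right-hand side $(\omega-\omega^{k_j+1})^\top H(\omega^{k_j}-\omega^{k_j+1})$ tends to $0$ because the difference term vanishes and $\omega^{k_j+1}$ stays bounded; the closedness of $\mathcal X$ and $\Lambda$ gives $\bar\omega\in\Omega$; the affine map $F$ is continuous so $(\omega-\omega^{k_j+1})^\top F(\omega^{k_j+1})\to(\omega-\bar\omega)^\top F(\bar\omega)$; and lower semicontinuity of $\theta$ together with $\theta(x^{k_j+1})\to$ (a limit we can bound, using continuity of the affine part and the inequality itself) yields $\limsup_{j}\theta(x^{k_j+1})\le\theta(\bar x)$ when $\omega=\bar\omega$ is inserted, hence $\theta(x^{k_j+1})\to\theta(\bar x)$. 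Taking the limit then shows $\bar\omega\in\Omega$ satisfies $\theta(x)-\theta(\bar x)+(\omega-\bar\omega)^\top F(\bar\omega)\ge 0$ for all $\omega\in\Omega$, i.e. $\bar\omega\in\Omega^\ast$.

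With $\bar\omega\in\Omega^\ast$ established, I would finish by invoking Lemma~\ref{pluslem1} a second time: since $\{\omega^k\}$ is quasi-Fej\'er convergent to $\Omega^\ast$ and one of its cluster points $\bar\omega$ lies in $\Omega^\ast$, the whole sequence converges to $\bar\omega$. Alternatively, one can argue directly: for this particular $\bar\omega\in\Omega^\ast$, inequality~(\ref{3.8}) shows $\|\omega^k-\bar\omega\|_H$ is nonincreasing hence convergent, and it has a subsequence tending to $0$, so the whole sequence $\|\omega^k-\bar\omega\|_H\to 0$, and again $H\succ 0$ upgrades this to $\omega^k\to\bar\omega$ in the Euclidean norm.

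The main obstacle is the limiting argument showing $\bar\omega\in\Omega^\ast$, specifically handling the nonsmooth term $\theta$: one must be careful to use only lower semicontinuity of $\theta$ (which is all that is assumed) and to recover the missing upper bound on $\limsup_j\theta(x^{k_j+1})$ from the variational inequality~(\ref{3.1}) itself, rather than assuming continuity of $\theta$. Everything else — boundedness, summability of the successive differences, and the final convergence — is routine once Proposition~\ref{pro3.1}, Lemma~\ref{lem3.1}, Lemma~\ref{thm3.1}, and Lemma~\ref{pluslem1} are in hand.
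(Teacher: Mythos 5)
Your proposal is correct and follows essentially the same route as the paper: quasi-Fej\'er boundedness via Theorem~\ref{plusthe1} and Lemma~\ref{pluslem1}, vanishing of $\|\omega^k-\omega^{k+1}\|_H$ from the contraction~(\ref{3.8}), passage to the limit in the variational inequality~(\ref{3.1}) along a convergent subsequence, and a final appeal to Lemma~\ref{pluslem1} to upgrade subsequential to full convergence. Your extra care with the nonsmooth term is in fact a small improvement on the paper, which invokes ``continuity of $\theta$'' even though $\theta$ is only assumed closed proper convex; since $\theta(x^{k_j+1})$ enters the limiting inequality with a minus sign, lower semicontinuity ($\liminf_j\theta(x^{k_j+1})\ge\theta(\bar x)$) already gives the inequality in the right direction, so your $\limsup$ detour is not even needed.
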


\begin{proof}
According to Lemma \ref{pluslem1}, we know that the sequence $\left\{\omega^{k}\right\} $ is bounded. According to (\ref{3.8}), we have

\begin{equation}\label{3.10}
\begin{aligned}
\lim_{k\to \infty}\|\omega^{k}-\omega^{k+1}\|_H^2=0
\end{aligned}
\end{equation}

Let $\bar{\omega}$ be a cluster point of $\left\{\omega^{k}\right\} $, and $\left\{\omega^{k_j}\right\} $ be a subsequence converging to $\bar{\omega}$.
It follows from (\ref{3.1}) that

\begin{equation*}
\begin{aligned}
\omega^{k_j} \in \Omega, \theta(x)-\theta(x^{k_j})+(\omega-\omega^{k_j})^\top F(\omega^{k_j}) \geq (\omega-\omega^{k_j})^\top H(\omega^{k_j-1}-\omega^{k_j}),\ \forall\omega \in \Omega.
\end{aligned}
\end{equation*}

Since the matrix $H$ is positive definite, it follows from (\ref{3.10}) and the continuity of $\theta(x)$ and $F(\omega)$ that

\begin{equation*}
\begin{aligned}
\bar{\omega} \in \Omega, \ \ \ \theta(x)-\theta(\bar{x})+(\omega-\bar{\omega})^\top F(\bar{\omega}) \geq 0, \ \ \ \forall \omega \in \Omega.
\end{aligned}
\end{equation*}

This implies $ \bar{\omega} \in \Omega^\ast$. Then it follows from Lemma \ref{pluslem1} that $\lim \limits_{k\to \infty}\omega^{k}=\bar{\omega} \in \Omega^\ast $, then the proof is complete.

\end{proof}

\begin{theorem}\cite[Theorem 3.5]{12he}
Let$ \left\{\omega^{k}=(x^{k},\lambda^{k})\right\} $ be the sequence generated by the proposed penalty dual-primal balanced ALM (\ref{1.6}) and $H$ be defined in (\ref{3.2}). For any integer number $t>O$, if we define

\begin{eqnarray}\label{3.12}
\tilde{\omega}_t :=\frac{1}{t+1}\sum_{k=0}^t \omega^{k+1},
\end{eqnarray}
then we have

$$\tilde{\omega}_t \ \  \in \Omega, \theta(\tilde{x_t})-\theta(x)+(\tilde{\omega_t}-\omega)^\top F(\omega) \leq
\frac{1}{2(t+1)} \|\omega-\omega^0\|_H^2, \ \ \forall \omega \in \Omega. $$
\end{theorem}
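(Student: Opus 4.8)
The plan is to establish the ergodic $O(1/t)$ convergence rate by summing the fundamental inequality from Lemma~\ref{lem3.2} over the iterations and then invoking the convexity of $\theta$ together with the monotonicity of the affine operator $F$. This is the standard route for VI-based analyses of ALM-type schemes, so the argument should be short once the pieces from the excerpt are assembled.

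First I would start from inequality \eqref{3.5} in Lemma~\ref{lem3.2}, which for every $\omega \in \Omega$ reads
\begin{equation*}
\theta(x)-\theta(x^{k+1})+(\omega-\omega^{k+1})^\top F(\omega) \geq \tfrac{1}{2}\left(\|\omega-\omega^{k+1}\|_H^2-\|\omega-\omega^{k}\|_H^2\right)+\tfrac{1}{2}\|\omega^{k}-\omega^{k+1}\|_H^2.
\end{equation*}
Dropping the manifestly nonnegative term $\tfrac12\|\omega^k-\omega^{k+1}\|_H^2$ (here Proposition~\ref{pro3.1} is used), rearranging, and summing over $k=0,1,\dots,t$ makes the right-hand side telescope, leaving
\begin{equation*}
\sum_{k=0}^{t}\left\{\theta(x^{k+1})-\theta(x)+(\omega^{k+1}-\omega)^\top F(\omega)\right\} \leq \tfrac{1}{2}\|\omega-\omega^{0}\|_H^2-\tfrac{1}{2}\|\omega-\omega^{t+1}\|_H^2 \leq \tfrac{1}{2}\|\omega-\omega^{0}\|_H^2.
\end{equation*}
Next I would divide by $t+1$ and bring the sum inside: by the convexity of $\theta$ one has $\theta(\tilde x_t) \leq \tfrac{1}{t+1}\sum_{k=0}^{t}\theta(x^{k+1})$, and since $F$ is affine the term $(\tilde\omega_t-\omega)^\top F(\omega)$ equals $\tfrac{1}{t+1}\sum_{k=0}^{t}(\omega^{k+1}-\omega)^\top F(\omega)$ exactly. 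Combining these with the definition \eqref{3.12} of $\tilde\omega_t$ yields precisely
\begin{equation*}
\theta(\tilde x_t)-\theta(x)+(\tilde\omega_t-\omega)^\top F(\omega) \leq \frac{1}{2(t+1)}\|\omega-\omega^{0}\|_H^2, \quad \forall \omega \in \Omega.
\end{equation*}
Finally, $\tilde\omega_t \in \Omega$ follows because $\Omega = \mathcal{X}\times\Lambda$ is convex and each $\omega^{k+1}\in\Omega$, so $\tilde\omega_t$ is a convex combination of points of $\Omega$.

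The main obstacle, such as it is, is largely bookkeeping rather than genuine difficulty: one must be careful about the direction of the inequality (note the statement is written with $\theta(\tilde x_t)-\theta(x)$ and $(\tilde\omega_t-\omega)$, i.e.\ the ``reversed'' sign relative to \eqref{3.5}, which is exactly why the telescoping and the convexity step line up), and one must use the affineness of $F$ in the correct place---it is $F$ being affine (equivalently \eqref{2.4}) that lets us replace $(\omega^{k+1}-\omega)^\top F(\omega^{k+1})$-type expressions by $(\omega^{k+1}-\omega)^\top F(\omega)$ and then pull the average through. No new estimate beyond Lemmas~\ref{lem3.1}--\ref{lem3.2} and Proposition~\ref{pro3.1} is needed, so the proof is essentially a transcription of the argument of \cite[Theorem 3.5]{12he} to the present matrix $H$.
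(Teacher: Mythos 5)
Your proposal is correct and follows essentially the same route as the paper: drop the nonnegative $\tfrac12\|\omega^k-\omega^{k+1}\|_H^2$ term in \eqref{3.5}, telescope the sum over $k=0,\dots,t$, divide by $t+1$, and invoke convexity of $\theta$ (and of $\Omega$) to pass to the ergodic iterate $\tilde\omega_t$. The only cosmetic difference is that the averaging of the $(\omega^{k+1}-\omega)^\top F(\omega)$ terms needs only linearity of the inner product in its first argument (the affineness/skew-symmetry of $F$, i.e.\ \eqref{2.4}, was already consumed in Lemma~\ref{lem3.2}), but this does not affect the validity of the argument.
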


\begin{proof}
It follows from (\ref{3.5}) that $\omega^{k+1} \in \Omega$,
\begin{eqnarray}\label{3.13}
\begin{aligned}
  \theta(x)-\theta(x^{k+1})+(\omega-\omega^{k+1})^\top F(\omega)+\frac{1}{2}\|\omega-\omega^{k}\|_H^2 \geq \frac{1}{2}\|\omega-\omega^{k+1}\|_H^2, \ \forall \omega \in \Omega.
\end {aligned}
\end{eqnarray}

Summarizing the inequalities (\ref{3.13}) over $k=0,1,\cdots,t$, we obtain

\begin{eqnarray*}
\begin{aligned}
(t+1)\theta(x)-\sum_{k=0}^t\theta(x^{k+1})+((t+1)\omega-\sum_{k=0}^t\omega^{k+1})^\top F(\omega)+\frac{1}{2}\|\omega-\omega^{0}\|_H^2 \geq 0, \ \forall \omega \in \Omega.
\end {aligned}
\end{eqnarray*}

It follows from (\ref{3.12}) that

\begin{eqnarray}\label{3.14}
\begin{aligned}
\frac{1}{t+1}\sum_{k=0}^t\theta(x^{k+1})-\theta(x)+(\tilde{\omega}_t-\omega)^\top F(\omega) \leq \frac{1}{2(t+1)}\|\omega-\omega^{0}\|_H^2, \ \forall \omega \in \Omega.
\end {aligned}
\end{eqnarray}

Note that $\tilde{\omega}_t$ define in (\ref{3.12}) is a convex combination of all iterates $\omega^k$ for $k=0,\cdots,t$, and $\theta(x)$ is convex. We thus have

$$\tilde{x}_t=\frac{1}{t+1}\sum_{k=0}^t x^{k+1},$$

and also

$$\theta(\tilde{x}_t) \leq \frac{1}{t+1}\sum_{k=0}^t \theta(x^{k+1}).$$

Substituting it into (\ref{3.14}), the theorem follows directly.
\end{proof}

The above theorem shows the worst-case $O(1/t)$ convergence rate of the proposed penalty dual-primal balanced ALM \eqref{1.6}, where $t$ denotes the total iteration counter.

\section{Splitting version}

In this section, following the same extension technique in \cite{12he}, we also design the splitting version for the dual-primal balanced ALM (\ref{1.6}) to solve the following multi-block separable convex optimization problem with both linear equality and inequality constraints:

\begin{equation}\label{4.1}
\min\left\{\sum_{i=1}^{p}\theta_i(x_i)|\ \sum_{i=1}^{p}A_{i}x_{i}=b\ (or \geq b),\ x_{i}\in\mathcal{X}_{i}\right\},
\end{equation}
where $p\geq 1$ is the number of subfunctions, and $\sum_{i=1}^{p}n_{i}=n$; $\theta_{i},\ i=1, \ldots, p,$ are closed proper convex  but not necessarily smooth functions; $\mathcal{X}_{i}\subset \mathbb{R}^{n_{i}},\ i=1, \ldots, p,$ are nonempty closed convex sets; $A_{i} \in \mathbb{R}^{m\times n_{i}},\ i=1, \ldots, p,$ are given matrixs and $b\in \mathbb{R}^{m}$ is a known vector. The model (\ref{4.1}) can be applied to various domains, such as, e.g. \cite{20boy,21sun,22yua}. It is clear that the model (\ref{4.1}) is an extension of the model (\ref{1.1}) from $p=1$ to $p\geq 1$.

\subsection{Splitting version of the penalty dual-primal ALM}

In this section, we extend the penalty dual-primal ALM to solve the multi-block separable convex optimization problem (\ref{4.1}) and proposed a splitting version of (\ref{1.6}) below.

\begin{center}
\fbox{%
\parbox{\textwidth}
{
{\bfseries Algorithm 2: the spitting penalty dual-primal balanced ALM }\\
Let $i=1,\cdots, p,$ and $x_{i}\in\mathcal{X}_{i}$, $\beta_{i}>0$; $Q_{i}\succ0$ are arbitrarily given positive-defined matrixes. Then the new iterate $(x^{k+1},\lambda^{k+1})$ is generated with $(x^k,\lambda^k)$ via the following steps:

\begin{eqnarray}\label{4.2}
 \left\{ \begin{array}{ll}
\lambda^{k+1}= \lambda^{k}-\sum_{i=1}^{p}\beta_{i}(\sum_{i=1}^{p}A_{i}x_{i}^{k}-b),\\
x_{i}^{k+1}= \arg \min  \left\{ \theta(x_{i})-\langle 2\lambda^{k+1}-\lambda^k,A_{i}x_{i}-b \rangle +\frac{\beta_{i}}{2}{\|A_{i}(x_{i}-x_{i}^k)\|}^2+\frac{1}{2}\|x_{i}-x_{i}^k\|_{Q_{i}}^2   \right\}.
\end{array} \right.
\end{eqnarray}
}
}
\end{center}

Where the quadratic terms $\frac{\beta_{i}}{2}{\|A_{i}(x_{i}-x_{i}^k)\|}^2$ can be treated as the penalty terms, while the quadratic  terms $\frac{1}{2}\|x-x^k\|_Q^2$ can be regarded as the matrix proximal terms.

\subsection{Variational inequality characterization of the splitting version }

In order to analyze convergence, we also need to give the optimality condition of the model (\ref{4.2}) in the variational inequality context. Firstly, we reuse the letters and set

\begin{equation*}
\Omega:= \mathcal{X}_{1} \times \ldots \times \mathcal{X}_{p} \times \Lambda\ \  where\ \ \Lambda:=
 \left\{ \begin{array}{ll}
\mathbb{R}^{m},\ \ \ if \ \ \sum_{i=1}^{p}A_{i}x_{i}=b,\\
\mathbb R_+^{m},\ \ \ if \ \ \sum_{i=1}^{p}A_{i}x_{i}\geq b.
\end{array} \right.
\end{equation*}

Analogous to the analysis in Section 2, it is clear that the optimal condition of (\ref{4.2}) is equivalent to finding a saddle point of the Lagrangian function of model (\ref{4.2}), which satisfy

\begin{equation}\label{4.3}
\begin{aligned}
\omega^\ast \in \Omega,\ \ \ \theta(x)-\theta(x^\ast)+(\omega-\omega^\ast)^\top F(\omega^\ast) \geq 0,\ \ \forall \omega \in \Omega,
\end {aligned}
\end{equation}

where
\begin{equation}\label{4.4}
\theta=\sum_{i=1}^{p}\theta_{i},\ \ \omega=\left(\begin{matrix} x\\ \lambda \end{matrix}\right),\ \ x=\left(\begin{matrix} x_{1}\\ \vdots\\ x_{p}\end{matrix}\right),\ \
F(\omega)=\left(\begin{matrix} -A_{1}^\top \lambda\\ \vdots\\ -A_{p}^\top \lambda\\ \sum_{i=1}^{p}A_{i}x_{i}-b \end{matrix}\right)\ \
and\ \ \Omega=\mathcal{X} \times \Lambda.
\end{equation}

Similarly, since the operator $F(\omega)$ defined in (\ref{4.4}) is affine with a skew-symmetric matrix and thus satisfies

\begin{equation}\label{4.5}
(\omega-\widetilde{\omega})^\top(F(\omega)-F(\widetilde{\omega}))\equiv 0\ \ \ \forall \omega, \widetilde{\omega} \in \Omega.
\end{equation}

\subsection{Convergence analysis of the splitting version}

According to the same analysis route in Section 3, we next give the essential lemma and the key proposition below.

\begin{lemma}\label{lem4.1}
Let $\left\{\omega^{k}=(x^{k},\lambda^{k})\right\}$ be the sequence generated by the spitting penalty dual-primal balanced ALM (\ref{4.2}). Then we have
\begin{equation}\label{4.6}
\begin{aligned}
   \omega^{k+1}\in\Omega, \theta(x)-\theta(x^{k+1})+(\omega-\omega^{k+1})^\top F(\omega^{k+1}) \geq (\omega-\omega^{k+1})^\top H(\omega^k-\omega^{k+1}),\ \forall\omega \in \Omega,
\end {aligned}
\end{equation}
 where
 \begin{equation}\label{4.7}
\begin{aligned}
 H=\left(\begin{matrix}\beta_1 A_1^\top {A_1}+Q_1 & \ldots & \mathbf{0} & -A_1^\top \\ \vdots  & \ddots & \vdots& \vdots\\
                        \mathbf{0} & \ldots & \beta_P A_p^\top {A_P}+Q_P & -A_P^\top \\ -A_1 & \ldots & -A_P & \sum_{i=1}^p \frac{1}{\beta_i}I
                        \end{matrix}\right).
\end{aligned}
\end{equation}
\end{lemma}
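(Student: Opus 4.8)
The plan is to mirror the single-block argument of Lemma~\ref{lem3.1} step by step, treating each $x_i$-subproblem separately and then stacking the resulting inequalities together with the $\lambda$-step. First I would apply Lemma~\ref{lem2.1} to the $i$-th $x_i$-subproblem in \eqref{4.2}: since the smooth part is $f_i(x_i)=-\langle 2\lambda^{k+1}-\lambda^k, A_ix_i-b\rangle+\frac{1}{2}\|x_i-x_i^k\|_{\beta_iA_i^\top A_i+Q_i}^2$, its gradient at $x_i^{k+1}$ is $-A_i^\top(2\lambda^{k+1}-\lambda^k)+(\beta_iA_i^\top A_i+Q_i)(x_i^{k+1}-x_i^k)$, and Lemma~\ref{lem2.1} gives
\begin{equation*}
x_i^{k+1}\in\mathcal{X}_i,\ \ \theta_i(x_i)-\theta_i(x_i^{k+1})+(x_i-x_i^{k+1})^\top(-A_i^\top\lambda^{k+1})\geq (x_i-x_i^{k+1})^\top\bigl\{-A_i^\top(\lambda^k-\lambda^{k+1})+(\beta_iA_i^\top A_i+Q_i)(x_i^k-x_i^{k+1})\bigr\},
\end{equation*}
for all $x_i\in\mathcal{X}_i$, exactly as in \eqref{3.3}. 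Summing these $p$ inequalities over $i=1,\dots,p$ and using $\theta=\sum_i\theta_i$ produces the primal block of \eqref{4.6}.

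Next I would handle the dual step. Rewriting the first line of \eqref{4.2} as $\sum_{i=1}^p A_i x_i^k - b + \bigl(\sum_{i=1}^p \frac{1}{\beta_i}\bigr)^{-1}$ --- wait, more carefully: the update is $\lambda^{k+1}=\lambda^k-(\sum_i\beta_i)(\sum_iA_ix_i^k-b)$, which I would rearrange to the form $\sum_{i=1}^pA_ix_i^k-b+\gamma^{-1}(\lambda^{k+1}-\lambda^k)=0$ where $\gamma=\sum_i\beta_i$; however to match the stated $H$ in \eqref{4.7}, whose bottom-right block is $\sum_i\frac{1}{\beta_i}I$, I expect the intended scaling in \eqref{4.2} to be $\lambda^{k+1}=\lambda^k-(\sum_i\frac{1}{\beta_i})^{-1}(\sum_iA_ix_i^k-b)$, so that $\sum_iA_ix_i^k-b+(\sum_i\frac{1}{\beta_i})(\lambda^{k+1}-\lambda^k)=0$. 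Granting that, I would add and subtract $\sum_iA_ix_i^{k+1}$ to obtain, for all $\lambda\in\Lambda$,
\begin{equation*}
(\lambda-\lambda^{k+1})^\top\Bigl(\sum_iA_ix_i^{k+1}-b\Bigr)\geq(\lambda-\lambda^{k+1})^\top\Bigl\{-\sum_iA_i(x_i^k-x_i^{k+1})+\Bigl(\sum_i\tfrac{1}{\beta_i}\Bigr)(\lambda^k-\lambda^{k+1})\Bigr\},
\end{equation*}
which is the analogue of \eqref{3.4}. Finally, combining the summed primal inequality with this dual inequality and recognizing the right-hand side as $(\omega-\omega^{k+1})^\top H(\omega^k-\omega^{k+1})$ with $H$ given by \eqref{4.7} and $F$ by \eqref{4.4} yields \eqref{4.6}.

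The main obstacle I anticipate is bookkeeping the cross terms correctly: the off-diagonal blocks $-A_i^\top$ (and $-A_i$) in $H$ come from the $-A_i^\top(\lambda^k-\lambda^{k+1})$ pieces of the primal inequalities and the $-A_i(x_i^k-x_i^{k+1})$ pieces of the dual inequality, and one must check that stacking the $p$ separate primal relations against the single dual relation reproduces precisely the block structure in \eqref{4.7}, with no spurious coupling among distinct $x_i,x_j$ (indeed there is none, since the $x_i$-subproblems are decoupled, giving the block-diagonal primal part $\mathrm{diag}(\beta_1A_1^\top A_1+Q_1,\dots,\beta_pA_p^\top A_p+Q_p)$). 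A secondary point worth flagging is the consistency of the penalty parameter appearing in the $\lambda$-update with the $\sum_i\frac{1}{\beta_i}I$ entry of $H$; whichever convention the authors intend, the identity $\sum_iA_ix_i^k-b+(\text{that scalar})(\lambda^{k+1}-\lambda^k)=0$ must be the one used, and the rest of the argument is then a verbatim transcription of the proof of Lemma~\ref{lem3.1}. Once \eqref{4.6} is in hand, positive definiteness of $H$ (needed downstream) would follow by the same Schur-complement / completion-of-squares computation as in Proposition~\ref{pro3.1}, writing $H$ as a sum of a rank-deficient Gram matrix and $\mathrm{diag}(Q_1,\dots,Q_p,\mathbf{0})$.
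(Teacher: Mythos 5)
Your proof is correct and follows essentially the same route as the paper's: apply Lemma~\ref{lem2.1} to each $x_i$-subproblem, sum the $p$ primal inequalities, rearrange the $\lambda$-update, and stack the results into the block form \eqref{4.7}. Your flag about the dual step is well taken --- the paper's own proof uses exactly the relation $\sum_{i=1}^{p}A_ix_i^{k}-b+\sum_{i=1}^{p}\tfrac{1}{\beta_i}(\lambda^{k+1}-\lambda^{k})=0$, which matches the bottom-right block of $H$ but not the update $\lambda^{k+1}=\lambda^{k}-\sum_{i}\beta_{i}(\sum_{i}A_{i}x_{i}^{k}-b)$ as literally written in \eqref{4.2}, so the algorithm statement carries a typo that your reading resolves in the only way consistent with the claimed $H$.
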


\begin{proof}
According to Lemma \ref{lem2.1}, for $i=1,\ldots, p, x_i^{k+1}\in \mathcal{X}_i$, 

\begin{equation*}
\begin{aligned}
 \theta_i(x_i)-\theta_i(x_i^{k+1})+(x-x_i^{k+1})^\top\left\{-A_i^\top(2\lambda^{k+1}-\lambda^{k})+
(\beta_i A_i^\top A_i+Q_i)(x_i^{k+1}-x_i^{k})\right\} \geq 0,
\end{aligned}
\end{equation*}
 for all ${x_i} \in\mathcal{X}_i$.
It can be written as
\begin{equation}\label{4.8}
\begin{aligned}
&x_i^{k+1}\in \mathcal{X}_i,\ \ \theta({x_i})-\theta(x_i^{k+1})+({x_i}-x_i^{k+1})^\top(-A_i^\top\lambda^{k+1})\\
& \geq ({x_i}-x_i^{k+1})^\top\left\{-A_i^\top(\lambda^{k}-\lambda^{k+1})+(\beta_i A_i^\top A_i+Q_i)(x_i^{k}-x_i^{k+1})\right\}, \ \ \forall {x_i}\in\mathcal{X}_i.
\end{aligned}
\end{equation}

For the $\lambda$-subproblem in (\ref{4.2}), we have
$$ \sum_{i=1}^{p}A_ix_i^{k}-b+\sum_{i=1}^{p}\frac{1}{\beta_i}(\lambda^{k+1}-\lambda^{k})=0,$$
which implies that
\begin{equation*}
\begin{aligned}
(\lambda-\lambda^{k+1})^\top \left\{\sum_{i=1}^{p}A_ix_i^{k+1}-b-\sum_{i=1}^{p}A_i(x_i^{k+1}-x_i^{k})+\sum_{i=1}^{p}\frac{1}{\beta_i}(\lambda^{k+1}-\lambda^{k}) \right\} \geq 0,\ \ \forall \lambda \in \Lambda,
\end{aligned}
\end{equation*}

which leads to

\begin{equation}\label{4.9}
\begin{aligned}
(\lambda-\lambda^{k+1})^\top(\sum_{i=1}^{p}A_ix_i^{k+1}-b) \geq
(\lambda-\lambda^{k+1})^\top (-\sum_{i=1}^{p}A_i(x_i^{k}-x_i^{k+1})+\sum_{i=1}^{p}\frac{1}{\beta_i}(\lambda^{k}-\lambda^{k+1})),
\end{aligned}
\end{equation}
for all $\lambda \in \Lambda$.
Combining (\ref{4.8}) and (\ref{4.9}), we have
\begin{equation*}
\begin{aligned}
&\theta(x)-\theta(x^{k+1})+\left(\begin{matrix}x-x^{k+1}\\ \lambda-\lambda^{k+1}\end{matrix}\right)^{\top}
\left(\begin{matrix}-A_1^\top \lambda^{k+1}\\ \cdots \\ -A_p^\top \lambda^{k+1} \\ \sum_{i=1}^{p}A_ix_i^{k+1}-b \end{matrix}\right)\\
&\geq\left(\begin{matrix}x-x^{k+1}\\ \lambda-\lambda^{k+1}\end{matrix}\right)^{\top}
\left(\begin{matrix}\beta_1 A_1^\top {A_1}+Q_1 & \ldots & \mathbf{0} & -A_1^\top \\ \vdots  & \ddots & \vdots& \vdots\\
                        \mathbf{0} & \ldots & \beta_P A_p^\top {A_P}+Q_P & -A_P^\top \\ -A_1 & \ldots & -A_P & \sum_{i=1}^p \frac{1}{\beta_i}I
                        \end{matrix}\right)
\left(\begin{matrix}x^{k}-x^{k+1}\\ \lambda^{k}-\lambda^{k+1} \end{matrix}\right),
\end{aligned}
\end{equation*}

namely

\begin{equation*}
\begin{aligned}
\theta(x)-\theta(x^{k+1})+(\omega-\omega^{k+1})^\top F(\omega^{k+1}) \geq (\omega-\omega^{k+1})^\top H (\omega^{k}-\omega^{k+1}).
\end{aligned}
\end{equation*}
\end{proof}

\begin{proposition}\label{pro4.1}
The matrix $H$ defined in (\ref{4.7}) is positive definite.
\end{proposition}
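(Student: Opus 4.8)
The plan is to mimic the proof of Proposition~\ref{pro3.1}, exhibiting $H$ as a sum of a rank-type positive semidefinite matrix coming from the penalty/balanced part and a block-diagonal positive semidefinite matrix coming from the proximal terms $Q_i$, and then to check strict positivity of the associated quadratic form by a case analysis. First I would split the matrix in~\eqref{4.7} as
\begin{equation*}
H=\left(\begin{matrix}\beta_1 A_1^\top A_1 & \ldots & \mathbf{0} & -A_1^\top \\ \vdots & \ddots & \vdots & \vdots\\ \mathbf{0} & \ldots & \beta_p A_p^\top A_p & -A_p^\top \\ -A_1 & \ldots & -A_p & \sum_{i=1}^p\tfrac{1}{\beta_i}I\end{matrix}\right)+\mathrm{diag}(Q_1,\ldots,Q_p,\mathbf{0}),
\end{equation*}
and observe that the second summand is positive semidefinite because each $Q_i\succ0$.

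Next I would show the first summand, call it $M$, is positive semidefinite, and identify precisely the null directions of the quadratic form $\omega^\top M\omega$. The clean way is to write, for $\omega=(x_1,\ldots,x_p,\lambda)$,
\begin{equation*}
\omega^\top M\omega=\sum_{i=1}^p\Bigl(\beta_i\|A_ix_i\|^2-2\langle A_ix_i,\lambda\rangle\Bigr)+\sum_{i=1}^p\tfrac{1}{\beta_i}\|\lambda\|^2=\sum_{i=1}^p\Bigl\|\sqrt{\beta_i}A_ix_i-\tfrac{1}{\sqrt{\beta_i}}\lambda\Bigr\|^2\ge0,
\end{equation*}
where the last equality is exactly the completion of squares used in Proposition~\ref{pro3.1}, done block by block; note that $\sum_i\langle A_ix_i,\lambda\rangle$ accounts for both the $-A_i^\top$ and $-A_i$ off-diagonal blocks, and $\sum_i\tfrac1{\beta_i}\|\lambda\|^2$ matches the lower-right block. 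Hence $\omega^\top H\omega=\sum_{i=1}^p\|\sqrt{\beta_i}A_ix_i-\tfrac{1}{\sqrt{\beta_i}}\lambda\|^2+\sum_{i=1}^p\|x_i\|_{Q_i}^2$.

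Finally I would conclude strict positivity: take any $\omega\neq0$. If some $x_i\neq0$, then $\|x_i\|_{Q_i}^2>0$ since $Q_i\succ0$, so $\omega^\top H\omega>0$. If all $x_i=0$ but $\lambda\neq0$, then each square term $\|\sqrt{\beta_i}A_ix_i-\tfrac{1}{\sqrt{\beta_i}}\lambda\|^2=\tfrac{1}{\beta_i}\|\lambda\|^2>0$ (using $p\ge1$ so at least one such term is present), again giving $\omega^\top H\omega>0$. Therefore $H$ is positive definite. I expect no real obstacle here; the only point requiring a little care is the bookkeeping of the cross terms when summing over the $p$ blocks, i.e. making sure the single multiplier $\lambda$ interacts correctly with each $A_ix_i$ and that the coefficient $\sum_i\tfrac1{\beta_i}$ on the $\|\lambda\|^2$ block is exactly what the block-wise completion of squares produces. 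Once that identity is written down, positive definiteness is immediate from $Q_i\succ0$ and $p\ge1$.
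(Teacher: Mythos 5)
Your proposal is correct and follows essentially the same route as the paper: the same splitting $H=\bar H+\mathrm{diag}(Q_1,\dots,Q_p,\mathbf{0})$ and the same block-wise completion of squares yielding $\omega^\top H\omega=\sum_{i=1}^p\bigl\|\sqrt{\beta_i}A_ix_i-\tfrac{1}{\sqrt{\beta_i}}\lambda\bigr\|^2+\sum_{i=1}^p\|x_i\|_{Q_i}^2$ (the paper obtains this via outer-product factorizations of $\bar H$ rather than expanding the quadratic form, which is the same identity). Your closing case analysis simply makes explicit the strict positivity that the paper asserts directly.
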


\begin{proof}
Note that

\begin{equation*}
\begin{aligned}
H=\bar{H}+\left(\begin{matrix}Q_1 & \cdots & \mathbf{0} & \mathbf{0} \\ \vdots & \cdots & \vdots & \vdots \\
 \mathbf{0} & \ddots & Q_P & \mathbf{0} \\ \mathbf{0} & \cdots & \mathbf{0} & \mathbf{0} \end{matrix}\right)
\end {aligned}
\end{equation*}
 and
\begin{equation*}
\begin{aligned}
\bar{H}&=\left(\begin{matrix}\beta_1 A_1^\top {A_1} & \ldots & \mathbf{0} & -A_1^\top \\ \vdots  & \ddots & \vdots& \vdots\\
\mathbf{0} & \ldots & \beta_P A_p^\top {A_P} & -A_P^\top \\ -A_1 & \ldots & -A_P & \sum_{i=1}^p \frac{1}{\beta_i}I \end{matrix}\right)\\
 &=\left(\begin{matrix}\beta_1 A_1^\top {A_1} & \ldots & \mathbf{0} & -A_1^\top \\ \vdots  & \ddots & \vdots& \vdots\\
\mathbf{0} & \ldots & \mathbf{0} & \mathbf{0} \\ -A_1 & \ldots & \cdots &  \frac{1}{\beta_1}I  \end{matrix}\right)+\cdots
+\left(\begin{matrix} \mathbf{0} & \ldots & \mathbf{0} & \mathbf{0} \\ \vdots  & \ddots & \vdots& \vdots\\
\mathbf{0} & \ldots & \beta_P A_p^\top {A_P} & -A_P^\top \\ \mathbf{0} & \ldots & -A_p &  \frac{1}{\beta_p}I  \end{matrix}\right)\\
&= \left(\begin{matrix}-\sqrt{\beta_1} A_1^\top \\ \mathbf{0} \\ \vdots \\ \mathbf{0} \\ \frac{1}{\sqrt{\beta_1}}I \end{matrix}\right)
\left(\begin{matrix} -\sqrt{\beta_1} {A_1} & \mathbf{0} &\cdots & \mathbf{0} & \frac{1}{\sqrt{\beta_1}}I \end{matrix}\right)
 +\cdots+\left(\begin{matrix} \mathbf{0} \\ \vdots \\ \mathbf{0} \\ -\sqrt{\beta_P} A_P^\top \\ \frac{1}{\sqrt{\beta_P}}I \end{matrix}\right)
 \left(\begin{matrix} \mathbf{0} & \cdots & \mathbf{0} & -\sqrt{\beta_P} {A_P} & \frac{1}{\sqrt{\beta_P}}I\end{matrix}\right)
\end{aligned}
\end{equation*}
Then, for arbitrary $\omega=(x,\lambda)\neq 0$, we have

\begin{equation*}
\begin{aligned}
\omega^\top H \omega =\sum_{i=1}^p \left\|\frac{1}{\sqrt{\beta_i}} \lambda-\sqrt{\beta_i}A_ix_i \right\|^2+\sum_{i=1}^p \|x_i\|_{Q_i}^2 >0
\end {aligned}
\end{equation*}
and hence the matrix $H$ is positive definite.
\end{proof}

\begin{remark}
Based on the Lemma \ref{lem4.1} and Proposition \ref{pro4.1}, similar lemmas and theorems for convergence and convergence rate as those in Section 3 can be obtained for the splitting version of the penalty dual-primal ALM (\ref{4.2}). Hence it is easy to prove the worst-case $O(1/t)$ convergence rate of the spitting penalty dual-primal balanced ALM (\ref{4.2}), where $t$ denotes the total iteration counter. For brevity, we omit the details proof.
\end{remark}

\section{Partial proximal strategy for the penalty dual-primal ALM}

The penalty dual-primal ALM (\ref{1.6}) can be generalized to the splitting version (\ref{4.2}) when the background issue changes from the one-block (\ref{1.1}) to the multiple-block case (\ref{4.1}). We can also proposed another way for the generalization, which only adds proximal matrix terms to partial $x_i$-subproblem of the splitting version (\ref{4.2}). In section 4, it is clear that each of the $x_i$-subproblem  involve the matrix proximal terms $\frac{1}{2}\|x_i-x_i^k\|_{Q_i}^2$ , so that it has two quadratic terms in each $x_i$-subproblem. In this sense, we think about partial proximity to the $x_i$-subproblem, so we proposed the Partial proximal penalty dual-primal ALM to solve the same multi-block separable convex optimization problem as in Section 4 (\ref{4.1}).

\subsection{Partial proximal penalty dual-primal ALM}

Without ambiguity, we add the proximal matrix terms to the former $p_1$ $x_i$-subproblem, while no to the latter $p_2$ $x_i$-subproblem, where
$1\leq p_1, p_2 \leq p$, and $p_1 +p_2 =p$. For succinctness of notation, we reused the notation in Section 4.

A partial proximal penalty dual-primal ALM for the multiple-block model (\ref{4.1}) can be read as:

\begin{center}
\fbox{%
\parbox{\textwidth}
{
{\bfseries Algorithm 3: the partial proximal penalty dual-primal ALM }\\
Let$\beta_{i}>0$ and $Q_{i}\succ0$ are arbitrarily given positive-defined matrixes, $i=1,\cdots, p,$ and $x_{i}\in\mathcal{X}_{i}$.  Then the new iterate $(x^{k+1},\lambda^{k+1})$ is generated with $(x^k,\lambda^k)$ via the following steps:

\begin{eqnarray}\label{5.1}
 \left\{ \begin{array}{ll}
\lambda^{k+1}= \lambda^{k}-\sum_{i=1}^{p}\beta_{i}(\sum_{i=1}^{p}A_{i}x_{i}^{k}-b),\\
x_{i}^{k+1}= \arg \min  \left\{ \theta(x_{i})-\langle 2\lambda^{k+1}-\lambda^k,A_{i}x_{i}-b \rangle +\frac{1}{2}\|x_{i}-x_{i}^k\|_{\beta_i A_i^\top A_i +Q_{i}}^2  \right\}, i=1, \cdots, p_{1},\\
x_{i}^{k+1}= \arg \min  \left\{ \theta(x_{i})-\langle 2\lambda^{k+1}-\lambda^k,A_{i}x_{i}-b \rangle +\frac{\beta_{i}}{2}{\|A_{i}(x_{i}-x_{i}^k)\|}^2\right\}, i= p_{1} +1, \cdots, p.
\end{array} \right.
\end{eqnarray}
}
}
\end{center}

\subsection{Convergence analysis of the partial proximal penalty dual-primal ALM (\ref{5.1})}

According to the same variational inequality characterization and the same analysis route in Section 4, we next give the essential lemma and the key proposition below.

\begin{lemma}\label{lem5.1}
Let $\left\{\omega^{k}=(x^{k},\lambda^{k})\right\}$ be the sequence generated by the partial proximal penalty dual-primal ALM (\ref{5.1}). Then we have
\begin{equation}\label{5.2}
\begin{aligned}
   \omega^{k+1}\in\Omega, \theta(x)-\theta(x^{k+1})+(\omega-\omega^{k+1})^\top F(\omega^{k+1}) \geq (\omega-\omega^{k+1})^\top H(\omega^k-\omega^{k+1}),\ \forall\omega \in \Omega,
\end {aligned}
\end{equation}
 where $H$ defined as
 \begin{equation}\label{5.3}
\begin{aligned}
 \left(\begin{matrix}
 \beta_1 A_1^\top {A_1}+Q_1 & \ldots & \mathbf{0} & \mathbf{0} & \cdots & \mathbf{0} & -A_1^\top \\
  \vdots  & \ddots & \vdots& \vdots&  \ddots & \vdots & \vdots& \\
  \mathbf{0} & \ldots & \beta_{P_1} A_{P_1}^\top {A_{P_1}}+Q_{P_1} & \mathbf{0} & \ldots & \mathbf{0} & -A_{P_1}^\top \\
  \mathbf{0}& \ldots & \mathbf{0} & \beta_{P_1+1} A_{P_1+1}^\top {A_{P_1+1}} & \ldots & \mathbf{0} & -A_{P_1+1}^\top\\
  \vdots & \ddots & \vdots & \vdots & \ddots & \vdots& \vdots \\
  \mathbf{0} & \ldots & \mathbf{0} & \mathbf{0} & \ldots & \beta_{P} A_{P}^\top {A_{P}} & -A_{P}^\top \\
  -A_1 & \ldots & -A_{P_1} & -A_{P_1+1} & \ldots & -A_{P} & \sum_{i=1}^p \frac{1}{\beta_i}I
   \end{matrix}\right).
\end{aligned}
\end{equation}
\end{lemma}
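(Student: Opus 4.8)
The plan is to mimic the proof of Lemma \ref{lem4.1} almost verbatim, the only change being the bookkeeping for the two groups of $x_i$-subproblems. First I would apply Lemma \ref{lem2.1} separately to each $x_i$-subproblem of \eqref{5.1}. For $i=1,\ldots,p_1$ the subproblem carries the full quadratic term $\frac{1}{2}\|x_i-x_i^k\|_{\beta_i A_i^\top A_i+Q_i}^2$, whose gradient at $x_i^{k+1}$ is $(\beta_i A_i^\top A_i+Q_i)(x_i^{k+1}-x_i^k)$; for $i=p_1+1,\ldots,p$ the subproblem carries only $\frac{\beta_i}{2}\|A_i(x_i-x_i^k)\|^2$, whose gradient is $\beta_i A_i^\top A_i(x_i^{k+1}-x_i^k)$. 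In both cases the term $-\langle 2\lambda^{k+1}-\lambda^k, A_i x_i-b\rangle$ contributes $-A_i^\top(2\lambda^{k+1}-\lambda^k)$ to the gradient, which I rewrite as $-A_i^\top\lambda^{k+1}-A_i^\top(\lambda^{k+1}-\lambda^k)=-A_i^\top\lambda^{k+1}+A_i^\top(\lambda^k-\lambda^{k+1})$, exactly as in \eqref{4.8}. Thus each of the $p$ variational inequalities has the form $x_i^{k+1}\in\mathcal{X}_i$, $\theta_i(x_i)-\theta_i(x_i^{k+1})+(x_i-x_i^{k+1})^\top(-A_i^\top\lambda^{k+1})\geq (x_i-x_i^{k+1})^\top\{-A_i^\top(\lambda^k-\lambda^{k+1})+M_i(x_i^k-x_i^{k+1})\}$, where $M_i=\beta_i A_i^\top A_i+Q_i$ for $i\le p_1$ and $M_i=\beta_i A_i^\top A_i$ for $i>p_1$.

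Next I would handle the $\lambda$-update. Since the dual step of \eqref{5.1} is identical to that of \eqref{4.2}, the identity $\sum_{i=1}^p A_i x_i^k-b+\sum_{i=1}^p\frac{1}{\beta_i}(\lambda^{k+1}-\lambda^k)=0$ still holds, and exactly as in the derivation of \eqref{4.9} I obtain $(\lambda-\lambda^{k+1})^\top(\sum_{i=1}^p A_i x_i^{k+1}-b)\geq (\lambda-\lambda^{k+1})^\top(-\sum_{i=1}^p A_i(x_i^k-x_i^{k+1})+\sum_{i=1}^p\frac{1}{\beta_i}(\lambda^k-\lambda^{k+1}))$ for all $\lambda\in\Lambda$. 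Summing the $p$ primal inequalities and this dual inequality, and reading off the right-hand side as a single quadratic form, gives $\theta(x)-\theta(x^{k+1})+(\omega-\omega^{k+1})^\top F(\omega^{k+1})\geq(\omega-\omega^{k+1})^\top H(\omega^k-\omega^{k+1})$ with $H$ the block matrix whose $(i,i)$ block is $M_i$, whose last row/column blocks are $-A_i$/$-A_i^\top$, and whose bottom-right block is $\sum_{i=1}^p\frac{1}{\beta_i}I$ — which is precisely \eqref{5.3}. This establishes \eqref{5.2}.

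There is no real obstacle here: the argument is a direct transcription of the proof of Lemma \ref{lem4.1}, with the single cosmetic difference that the diagonal blocks $M_i$ lose the summand $Q_i$ for the last $p_2$ indices. The one point deserving a word of care is the assembly of the summed inequalities into the compact matrix form: one must check that the off-diagonal coupling between the $x_i$ blocks and the $\lambda$ block is carried entirely by the terms $-A_i^\top(\lambda^k-\lambda^{k+1})$ from the primal side and $-A_i(x_i^k-x_i^{k+1})$ from the dual side, so that the cross blocks of $H$ are exactly $-A_i^\top$ and $-A_i$ and no spurious $Q_i$ contamination enters the last $p_2$ diagonal blocks. Once this is verified the conclusion is immediate. (One would then note, as in the Remark following Proposition \ref{pro4.1}, that the analogue of Proposition \ref{pro4.1} holds here as well: writing $H=\bar H+\mathrm{diag}(Q_1,\ldots,Q_{p_1},\mathbf 0,\ldots,\mathbf 0)$ and decomposing $\bar H$ into the rank-structured sum $\sum_{i=1}^p v_i v_i^\top$ with $v_i$ the block vector having $-\sqrt{\beta_i}A_i^\top$ in slot $i$ and $\frac{1}{\sqrt{\beta_i}}I$ in the last slot, one gets $\omega^\top H\omega=\sum_{i=1}^p\|\frac{1}{\sqrt{\beta_i}}\lambda-\sqrt{\beta_i}A_i x_i\|^2+\sum_{i=1}^{p_1}\|x_i\|_{Q_i}^2$; this is positive for $\omega\neq0$ provided $p_1\geq1$, after which the convergence and $O(1/t)$ rate follow as before.)
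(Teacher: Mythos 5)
Your proof is correct and follows essentially the same route as the paper's: apply Lemma \ref{lem2.1} to the two groups of $x_i$-subproblems separately (yielding the paper's \eqref{5.4} and \eqref{5.5}), combine with the dual inequality \eqref{5.6} obtained from the $\lambda$-update, and assemble everything into the quadratic form defined by $H$ in \eqref{5.3}. Your closing parenthetical concerns Proposition \ref{pro5.1} rather than this lemma, but it too matches the paper's subsequent argument.
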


\begin{proof}
According to Lemma \ref{lem2.1}, for $i=1,\ldots, p_1, x_i^{k+1}\in \mathcal{X}_i$,

\begin{equation*}
\begin{aligned}
 \theta_i(x_i)-\theta_i(x_i^{k+1})+(x-x_i^{k+1})^\top\left\{-A_i^\top(2\lambda^{k+1}-\lambda^{k})+
(\beta_i A_i^\top A_i+Q_i)(x_i^{k+1}-x_i^{k})\right\} \geq 0, 
\end{aligned}
\end{equation*}
for all ${x_i} \in\mathcal{X}_i$.
Which can be written as
\begin{equation}\label{5.4}
\begin{aligned}
&x_i^{k+1}\in \mathcal{X}_i,\ \ \theta({x_i})-\theta(x_i^{k+1})+({x_i}-x_i^{k+1})^\top(-A_i^\top\lambda^{k+1})\\
& \geq ({x_i}-x_i^{k+1})^\top\left\{-A_i^\top(\lambda^{k}-\lambda^{k+1})+(\beta_i A_i^\top A_i+Q_i)(x_i^{k}-x_i^{k+1})\right\}, \ \ \forall {x_i}\in\mathcal{X}_i.
\end{aligned}
\end{equation}

Similarly, for $i=p_1 +1,\ldots, p$, according to Lemma \ref{lem2.1} we have $x_i^{k+1}\in \mathcal{X}_i$,
\begin{equation*}
\begin{aligned}
 \theta_i(x_i)-\theta_i(x_i^{k+1})+(x-x_i^{k+1})^\top\left\{-A_i^\top(2\lambda^{k+1}-\lambda^{k})+
\beta_i A_i^\top A_i(x_i^{k+1}-x_i^{k})\right\} \geq 0, 
\end{aligned}
\end{equation*}
for all ${x_i} \in\mathcal{X}_i$.
Which can be written as
\begin{equation}\label{5.5}
\begin{aligned}
&x_i^{k+1}\in \mathcal{X}_i,\ \ \theta({x_i})-\theta(x_i^{k+1})+({x_i}-x_i^{k+1})^\top(-A_i^\top\lambda^{k+1})\\
& \geq ({x_i}-x_i^{k+1})^\top\left\{-A_i^\top(\lambda^{k}-\lambda^{k+1})+\beta_i A_i^\top A_i (x_i^{k}-x_i^{k+1})\right\}, \ \ \forall {x_i}\in\mathcal{X}_i.
\end{aligned}
\end{equation}
For the $\lambda$-subproblem in (\ref{5.1}), we have
$$ \sum_{i=1}^{p}A_ix_i^{k}-b+\sum_{i=1}^{p}\frac{1}{\beta_i}(\lambda^{k+1}-\lambda^{k})=0,$$
which implies that
\begin{equation*}
\begin{aligned}
(\lambda-\lambda^{k+1})^\top \left\{\sum_{i=1}^{p}A_ix_i^{k+1}-b-\sum_{i=1}^{p}A_i(x_i^{k+1}-x_i^{k})+\sum_{i=1}^{p}\frac{1}{\beta_i}(\lambda^{k+1}-\lambda^{k}) \right\} \geq 0,\ \ \forall \lambda \in \Lambda,
\end{aligned}
\end{equation*}
which leads to

\begin{equation}\label{5.6}
\begin{aligned}
(\lambda-\lambda^{k+1})^\top(\sum_{i=1}^{p}A_ix_i^{k+1}-b) \geq
(\lambda-\lambda^{k+1})^\top (-\sum_{i=1}^{p}A_i(x_i^{k}-x_i^{k+1})+\sum_{i=1}^{p}\frac{1}{\beta_i}(\lambda^{k}-\lambda^{k+1})),\ \ \ \forall \lambda \in \Lambda.
\end{aligned}
\end{equation}
Combining(\ref{5.4}), (\ref{5.5}) and(\ref{5.6}), we obtain

\begin{equation*}
\begin{aligned}
\theta(x)-\theta(x^{k+1})+\left(\begin{matrix}x-x^{k+1}\\ \lambda-\lambda^{k+1}\end{matrix}\right)^{\top}
\left(\begin{matrix}-A_1^\top \lambda^{k+1}\\ \cdots \\ -A_p^\top \lambda^{k+1} \\ \sum_{i=1}^{p}A_ix_i^{k+1}-b \end{matrix}\right)
\geq\left(\begin{matrix}x-x^{k+1}\\ \lambda-\lambda^{k+1}\end{matrix}\right)^{\top} H \left(\begin{matrix}x^{k}-x^{k+1}\\ \lambda^{k}-\lambda^{k+1} \end{matrix}\right),
\end{aligned}
\end{equation*}
namely,

\begin{equation*}
\begin{aligned}
\theta(x)-\theta(x^{k+1})+(\omega-\omega^{k+1})^\top F(\omega^{k+1}) \geq (\omega-\omega^{k+1})^\top H (\omega^{k}-\omega^{k+1}).
\end{aligned}
\end{equation*}
\end{proof}

\begin{proposition}\label{pro5.1}
The matrix $H$ defined in (\ref{5.3}) is positive definite.
\end{proposition}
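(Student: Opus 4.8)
The plan is to reproduce, almost verbatim, the argument of Proposition~\ref{pro4.1}. First I would peel off the proximal part and write
$$H=\bar H+\mathrm{blkdiag}\bigl(Q_1,\ldots,Q_{p_1},\mathbf 0,\ldots,\mathbf 0,\mathbf 0\bigr),$$
where $\bar H$ is obtained from (\ref{5.3}) by dropping every $Q_i$; thus $\bar H$ carries $\beta_iA_i^\top A_i$ on each of its $p$ leading diagonal blocks, the border blocks $-A_i^\top$ and $-A_i$, and $\sum_{i=1}^p\frac1{\beta_i}I$ in the last diagonal block. As in Proposition~\ref{pro4.1} I would then display $\bar H=\sum_{i=1}^p v_iv_i^\top$ with $v_i=\bigl(\mathbf 0,\ldots,-\sqrt{\beta_i}A_i^\top,\ldots,\mathbf 0,\tfrac1{\sqrt{\beta_i}}I\bigr)^\top$ (the two nonzero entries sitting in blocks $i$ and $p{+}1$), each summand being positive semidefinite. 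Collecting the quadratic form then yields, for every $\omega=(x_1,\ldots,x_p,\lambda)$,
$$\omega^\top H\omega=\sum_{i=1}^p\Bigl\|\tfrac1{\sqrt{\beta_i}}\lambda-\sqrt{\beta_i}A_ix_i\Bigr\|^2+\sum_{i=1}^{p_1}\|x_i\|_{Q_i}^2\ \ge\ 0.$$

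To upgrade this to strict positivity I would argue by contradiction. Suppose $\omega^\top H\omega=0$ for some $\omega$. The last sum forces $x_i=0$ for $i=1,\ldots,p_1$ because $Q_i\succ0$, and vanishing of the $i$-th squared term gives $\lambda=\beta_iA_ix_i$ for every $i$; taking any index $i\le p_1$ (this is where the hypothesis $p_1\ge1$ enters) we get $\lambda=0$. Feeding $\lambda=0$ back into the squared terms with $i>p_1$ gives $A_ix_i=0$ for those indices, whence $x_i=0$ for $i>p_1$ as well, and therefore $\omega=0$.

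The step I expect to be the genuine obstacle is exactly this last deduction, namely passing from $A_ix_i=0$ to $x_i=0$ for $i>p_1$. In Proposition~\ref{pro4.1} every block was equipped with a positive-definite proximal term $Q_i$, so the $x$-part of the quadratic form was automatically coercive; here the non-proximal blocks $i>p_1$ contribute only $\beta_i\|A_ix_i\|^2$, which degenerates on $\ker A_i$. Consequently the statement as written needs the additional standing hypothesis that $A_{p_1+1},\ldots,A_p$ have full column rank (equivalently $A_i^\top A_i\succ0$ for $i>p_1$), which is natural since one drops the proximal regularization only for blocks whose coefficient matrices are already injective. Under this hypothesis the argument above closes, and the ensuing key contraction, global convergence, and worst-case $O(1/t)$ rate for Algorithm~3 follow along the lines of Sections~3 and~4; without it, $H$ is merely positive semidefinite and the convergence analysis would have to be reworked on a suitably restricted quadratic form. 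Everything else is routine linear algebra identical to Proposition~\ref{pro4.1}.
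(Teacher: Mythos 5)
Your computation coincides with the paper's own proof: the same splitting of $H$ into $\bar H$ plus the block-diagonal proximal part, the same rank-one-type factorization of $\bar H$, and the same resulting quadratic form
\begin{equation*}
\omega^\top H\omega=\sum_{i=1}^p\Bigl\|\tfrac{1}{\sqrt{\beta_i}}\lambda-\sqrt{\beta_i}A_ix_i\Bigr\|^2+\sum_{i=1}^{p_1}\|x_i\|_{Q_i}^2 .
\end{equation*}
Up to that point the two arguments are identical.

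The obstacle you flag at the end is not a defect of your write-up but a genuine gap in the proposition as stated, which the paper's proof papers over: it simply asserts that the displayed quantity is $>0$ for every $\omega\neq 0$, and that assertion is false in general. Concretely, if some $A_j$ with $j>p_1$ has a nontrivial kernel, take $\omega$ with $x_j\in\ker A_j\setminus\{0\}$, all other $x_i=0$, and $\lambda=0$; then every term in both sums vanishes, so $\omega^\top H\omega=0$ although $\omega\neq 0$, and $H$ is only positive semidefinite. Your proposed repair --- requiring $A_{p_1+1},\dots,A_p$ to have full column rank, equivalently $A_i^\top A_i\succ 0$ for $i>p_1$ --- is exactly the missing hypothesis, and your contradiction argument (using $Q_i\succ 0$ to kill $x_1,\dots,x_{p_1}$, then $p_1\ge 1$ to force $\lambda=0$, then injectivity of the remaining $A_i$ to force the rest) closes the proof correctly under it. Note that the paper's standing assumptions do give $p_1\ge 1$, so that part of your argument is safe; only the full-column-rank condition needs to be added to the statement, or else the subsequent contraction and convergence claims for Algorithm 3 must be reworked with $H$ merely semidefinite.
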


\begin{proof}
Note that

\begin{equation*}
\begin{aligned}
H=\bar{H}+
\left(\begin{matrix}
Q_1 & \cdots & \mathbf{0} & \mathbf{0} & \cdots & \mathbf{0} \\
 \vdots & \ddots & \vdots & \vdots & \ddots & \vdots \\
 \mathbf{0} & \cdots & Q_{P_1} & \mathbf{0} & \cdots & \mathbf{0} \\
 \mathbf{0} & \cdots & \mathbf{0} & \mathbf{0} & \cdots & \mathbf{0} \\
 \vdots & \ddots & \vdots & \vdots & \ddots & \vdots\\
 \mathbf{0} & \cdots & \mathbf{0} & \mathbf{0} & \cdots & \mathbf{0} \\
 \mathbf{0} & \cdots & \mathbf{0} & \mathbf{0} & \cdots & \mathbf{0}
 \end{matrix}\right)
\end {aligned}
\end{equation*}
 and

\begin{equation*}
\begin{aligned}
\bar{H}&=\left(\begin{matrix}
\beta_1 A_1^\top {A_1} & \ldots & \mathbf{0} & \ldots & \mathbf{0} & -A_1^\top \\
\vdots  & \ddots & \vdots & \ddots & \vdots & \vdots \\
\mathbf{0} & \ldots & \beta_{P_1} A_{P_1}^\top {A_{P_1}} & \ldots & \mathbf{0} & -A_{P_1}^\top \\
\vdots & \ddots & \vdots & \ddots & \vdots & \vdots\\
\mathbf{0} & \ldots & \mathbf{0} & \ldots & \beta_{P} A_{P}^\top {A_{P}} & -A_{P}^\top\\
-A_1 & \ldots & -A_{P_1} & \ldots & -A_{P} & \sum_{i=1}^p \frac{1}{\beta_i}I \end{matrix}\right)\\
 &=\left(\begin{matrix}\beta_1 A_1^\top {A_1} & \ldots & \mathbf{0} & -A_1^\top \\ \vdots  & \ddots & \vdots& \vdots\\
\mathbf{0} & \ldots & \mathbf{0} & \mathbf{0} \\ -A_1 & \ldots & \cdots &  \frac{1}{\beta_1}I  \end{matrix}\right)+\cdots
+\left(\begin{matrix} \mathbf{0} & \ldots & \mathbf{0} & \mathbf{0} \\ \vdots  & \ddots & \vdots& \vdots\\
\mathbf{0} & \ldots & \beta_P A_p^\top {A_P} & -A_P^\top \\ \mathbf{0} & \ldots & -A_p &  \frac{1}{\beta_p}I  \end{matrix}\right)\\
&= \left(\begin{matrix}-\sqrt{\beta_1} A_1^ \top \\ \mathbf{0} \\ \vdots \\ \mathbf{0} \\ \frac{1}{\sqrt{\beta_1}}I \end{matrix}\right)
\left(\begin{matrix} -\sqrt{\beta_1} {A_1} & \mathbf{0} &\cdots & \mathbf{0} & \frac{1}{\sqrt{\beta_1}}I \end{matrix}\right)
 +\cdots+\left(\begin{matrix} \mathbf{0} \\ \vdots \\ \mathbf{0} \\ -\sqrt{\beta_P} A_P^\top \\ \frac{1}{\sqrt{\beta_P}}I \end{matrix}\right)
 \left(\begin{matrix} \mathbf{0} & \cdots & \mathbf{0} & -\sqrt{\beta_P} {A_P} & \frac{1}{\sqrt{\beta_P}}I\end{matrix}\right)
\end{aligned}
\end{equation*}

Then, for arbitrary $\omega=(x,\lambda)\neq 0$, we have

\begin{equation*}
\begin{aligned}
\omega^\top H \omega =\sum_{i=1}^p \left\|\frac{1}{\sqrt{\beta_i}} \lambda-\sqrt{\beta_i}A_ix_i \right\|^2+\sum_{i=1}^{p_1} \|x_i\|_{Q_i}^2 >0
\end {aligned}
\end{equation*}

and hence the matrix $H$ is positive definite.

\end{proof}

\begin{remark}
Based on the Lemma \ref{lem5.1} and Proposition \ref{pro5.1}, similar lemmas and theorems for convergence and convergence rate as those in Section 3 can be obtained for the splitting version of the penalty dual-primal ALM (\ref{5.1}). Hence it is easy to prove the worst-case $O(1/t)$ convergence rate of the spitting penalty dual-primal balanced ALM (\ref{5.1}), where $t$ denotes the total iteration counter. For brevity, we omit the details proof.
\end{remark}

\section{Numerical experiments}

In this section, two numerical tests and a practical application are used for the purpose of demonstrate the proposed algorithm performance. All code are written in Matlab and all experiments are performed in Matlab R2015b on a workstation with an Intel(R) Core(TM) i7-8550U CPU(1.80GHz) and 8GB RAM.

We firstly show the comparison among the penalty dual-primal balanced ALM (\ref{1.6}), the primal-dual balanced ALM proposed in \cite{14xu} and the balanced ALM proposed in \cite{12he} and  for soling the basic pursuit problem (equality-constrained $l_1$ minimization problem). The preliminary numerical result shows that the proposed method has a better performance.
\begin{example}
The basic pursuit problem is:
\begin{eqnarray}\label{6.1}
\min\{\|x\|_1\,|\,Ax=b, x\in \mathbb{R}^{n}\},
\end{eqnarray}
with $\|x\|_1=\sum_{i=1}^{n}|x_i|$ denotes the $l_1$-norm of a vector, $A\in \mathbb{R}^{m\times n}(m<n)$ is a given matrix and $b \in \mathbb{R}^{m}$ is a given vector.

Apply the proposed method (\ref{1.6}) to the problem (\ref{6.1}), the iterative scheme is as follows:

\begin{eqnarray*}
 \left\{ \begin{array}{ll}
\lambda^{k+1}= \lambda^{k}-\beta(Ax^{k}-b),\\
x^{k+1}= \arg \min  \left\{ \|x\|_1-\langle 2\lambda^{k+1}-\lambda^k,Ax-b \rangle
+\frac{1}{2}\|x-x^k\|_{\beta A^\top A +Q}^2  \right\}.
\end{array} \right.
\end{eqnarray*}

In particular, take $Q=\tau I-\beta A^\top A$ with $\tau >\beta \|A^\top A\|$, the iterate scheme could converted to:

\begin{eqnarray}\label{p6.2}
 \left\{ \begin{array}{ll}
\lambda^{k+1}= \lambda^{k}-\beta(Ax^{k}-b),\\
x^{k+1}= \arg \min  \left\{\|x\|_{1}+\frac{\tau}{2}\|x-x^k-\frac{1}{\tau}A^\top(2\lambda^{k+1}-\lambda^k)\|^2 \right\}.
\end{array} \right.
\end{eqnarray}

Then the solutions of the problem (\ref{p6.2}) are given respectively by the following explicit form:
\begin{eqnarray*}
 \left\{ \begin{array}{ll}
\lambda^{k+1}= \lambda^{k}-\beta(Ax^{k}-b),\\
x^{k+1}=S_{1/\tau}[x^{k}+\frac{1}{\tau}A^\top(2\lambda^{k+1}-\lambda^k)],
\end{array} \right.
\end{eqnarray*}
where $S_{\delta}(t)$ is the soft thresholding operator \cite{24che} defined as

\begin{eqnarray}\label{p6.3}
(S_{\delta}(t))_{i}:=(1-\delta/|t_{i}|)_{+}\cdot t_{i},i=1,2,\cdot\cdot\cdot,m.
\end{eqnarray}

Follow the same rules, apply the primal-dual balanced ALM  \cite{14xu} for (\ref{6.1}), the iterative scheme is read as:

\begin{eqnarray*}
{\rm (DP-BALM)} \left\{ \begin{array}{ll}
\lambda^{k+1}= \lambda^{k}- (\frac{1}{\tau}AA^\top+\delta I_m)^{-1} (Ax^{k}-b),\\
x^{k+1} = \arg \min  \left\{\|x\|_{1}+\frac{\tau}{2}\|x-x^k-\frac{1}{\tau}A^\top(2\lambda^{k+1}-\lambda^k)\|^2 \right\}.
\end{array} \right.
\end{eqnarray*}

And  the balanced ALM \cite{12he} for (\ref{6.1}) reads as :
\begin{eqnarray*}
{\rm (Balanced \ ALM)} \left\{ \begin{array}{ll}
x^{k+1} = \arg \min  \left\{\|x\|_{1}+\frac{\tau}{2}\|x-x^k-\frac{1}{\tau}A^\top\lambda^k\|^2 \right\},\\
\lambda^{k+1}= \lambda^{k}- (\frac{1}{\tau}AA^\top+\delta I_m)^{-1} (A(2x^{k+1}-x^{k})-b).
\end{array} \right.
\end{eqnarray*}

We generate the data by the same way in \cite{14xu}: the matrixes $A$ are generated from independently normal distribution $\mathcal{N}(0,1)$; for all tested algorithm the initial point $(x^0,\lambda^0)$ is randomly generated, and we take the following toned values of parameters for the mentioned experiments:
\begin{enumerate}
\item the penalty dual-primal balanced ALM: $\beta:=0.001$ and $\tau=2.5$;

\item the primal-dual balanced ALM and the balanced ALM :$\delta=1000$ and $\tau:2.5$.
\end{enumerate}

The termination criteria is defined by
\begin{eqnarray*}
R(k)=\max\left\{\|x^{k+1}-x^{k}\|,  \|\lambda^{k+1}-\lambda^{k}\| \right\} < 10^{-7}
\end{eqnarray*}

Table \ref{tab1} lists the  number of iterations and runtime in seconds respectively of the the PDP-ALM, the DP-BALM and the balanced ALM for solving the basic pursuit problem with different dimension $m \times n$ of $A$, and $CR=\|Ax-b\|^2$ stands for constrained residual. From the numerical experimental result, it is clearly that compared with the the PDP-ALM and the DP-BALM under different dimension of $A$, the proposed method has much better performs both in the number of iterations and runtime. To further visualize the numerical results, we also plot the convergence curves versus iteration numbers of some representative examples in Figure \ref{fig1}.

\begin{table}\label{tab1}
\centering\caption{The  number of iterations and runtime of the PL-ADMM and the proposed method for solving LASSO model }
\begin{tabular}{cccccccccc}\hline
 \multicolumn{1}{l}{}&\multicolumn{3}{l}{PDP-ALM}&\multicolumn{3}{l}{DP-ALM}&\multicolumn{3}{l}{B-ALM}\\
$m \times n$  &Iter. &Time &$CR$ &Iter. &Time &$CR$ &Iter. &Time &$CR$\\ \hline
$300\times500$	    &465	&0.11 	&2.86e-4	&562	&0.16 	&3.64e-4	&564	&0.18 	&3.64e-4\\
$400\times600$  	&503	&0.15 	&1.49e-4	&669	&0.28 	&1.82e-4	&671	&0.28 	&1.82e-4\\
$450\times750$  	&453	&0.18 	&6.60e-5	&610	&0.39 	&1.01e-4	&612	&0.35 	&1.01e-4\\
$500\times900$  	&373	&0.20 	&4.63e-5	&511	&0.39 	&5.98e-5	&513	&0.39 	&5.98e-5\\
$500\times1000$ 	&1294	&0.73 	&7.25e-5	&1969	&1.50 	&2.29e-4	&1971	&1.47 	&2.29e-4\\
$600\times1150$ 	&843	&0.70 	&1.77e-4	&1263	&1.40 	&1.65e-4	&1266	&1.34 	&1.70e-4\\
$700\times1300$ 	&672	&0.75 	&8.90e-5	&1073	&1.51 	&1.15e-4	&1074	&1.50 	&1.15e-4\\
$800\times1450$	    &455	&0.65 	&4.63e-5	&704	&1.26 	&1.53e-4	&706	&1.31 	&1.48e-4\\
$900\times1600$  	&547	&1.03 	&5.99e-5	&943	&2.21 	&8.24e-5	&945	&2.27 	&8.27e-5\\
$1000\times1750$	&876	&1.95 	&4.32e-5	&1523	&4.14 	&6.37e-5	&1525	&4.22 	&6.40e-5\\
$1100\times1900$	&725	&1.83 	&4.78e-5	&1179	&3.81 	&8.53e-5	&1181	&3.89 	&8.58e-5\\
$1100\times2000$	&694	&1.85 	&4.47e-5	&1263	&4.24 	&9.12e-5	&1265	&4.30 	&9.13e-5\\
$1200\times2150$	&406	&1.31 	&3.85e-5	&602	&3.65 	&5.38e-5	&896	&3.88 	&1.10e-4\\
$1300\times2300$	&420	&1.52 	&6.66e-5	&762	&3.53 	&6.27e-5	&760	&3.58 	&7.01e-5\\
\hline
\end{tabular}
\end{table}

\begin{figure}
\centering

\subfigure[$m \times n=300\times 500$]{
\includegraphics[width=70mm]{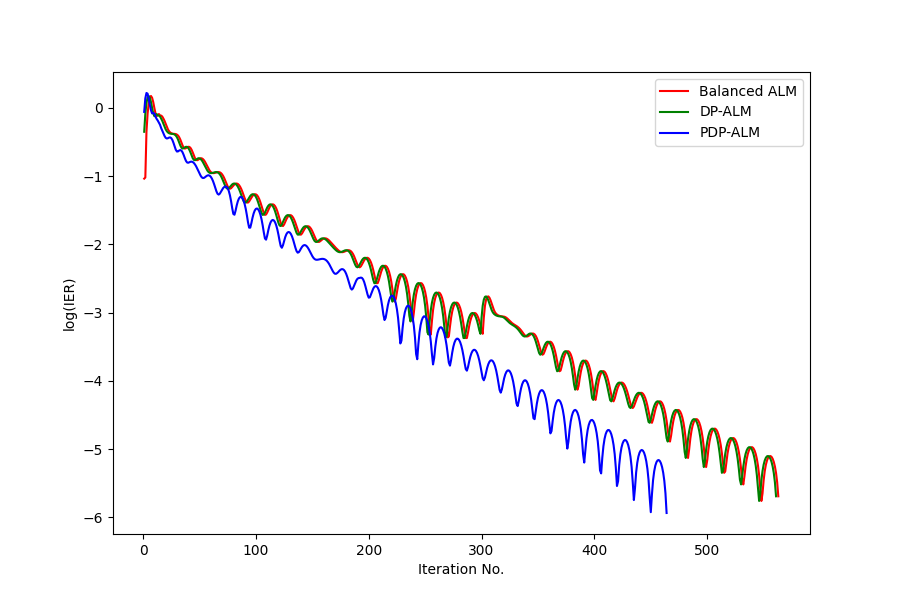}
}
\quad
\subfigure[$m \times n=450\times 750$]{
\includegraphics[width=70mm]{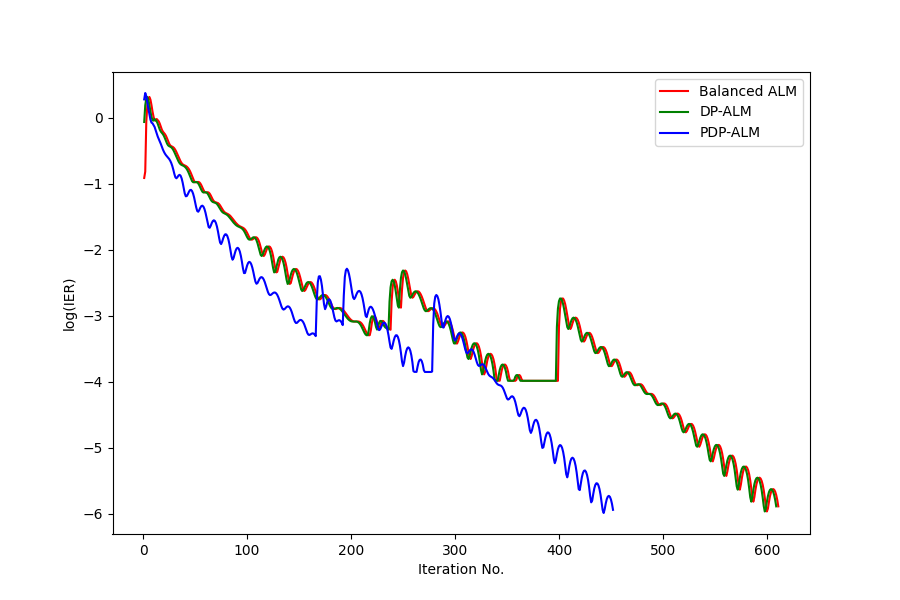}
}
\quad
\subfigure[$m \times n=500\times 900$]{
\includegraphics[width=70mm]{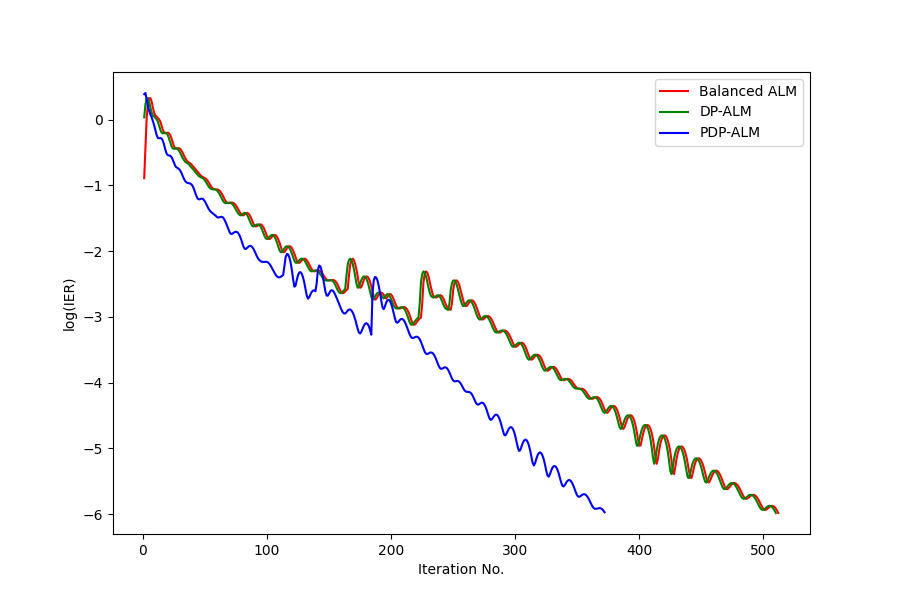}
}
\quad
\subfigure[$m \times n=700\times 1300$]{
\includegraphics[width=70mm]{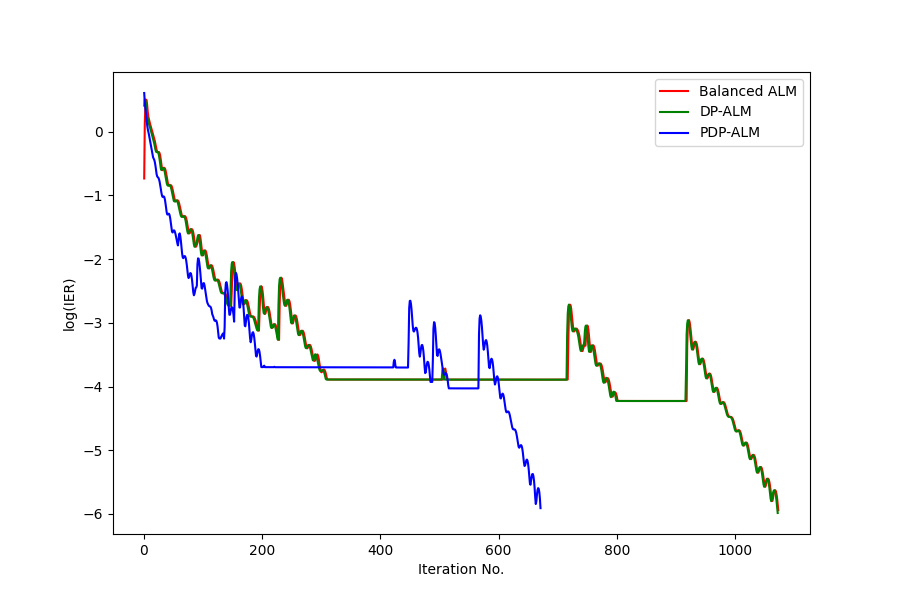}
}
\quad
\subfigure[$m \times n=800\times 1450$]{
\centering
\includegraphics[width=70mm]{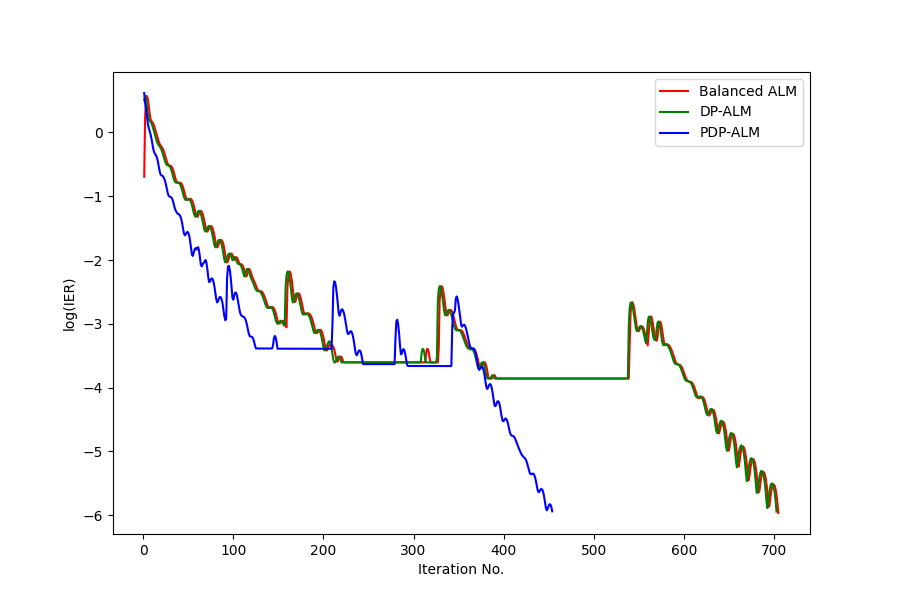}
}
\quad
\subfigure[$m \times n=1000\times 1750$]{
\includegraphics[width=70mm]{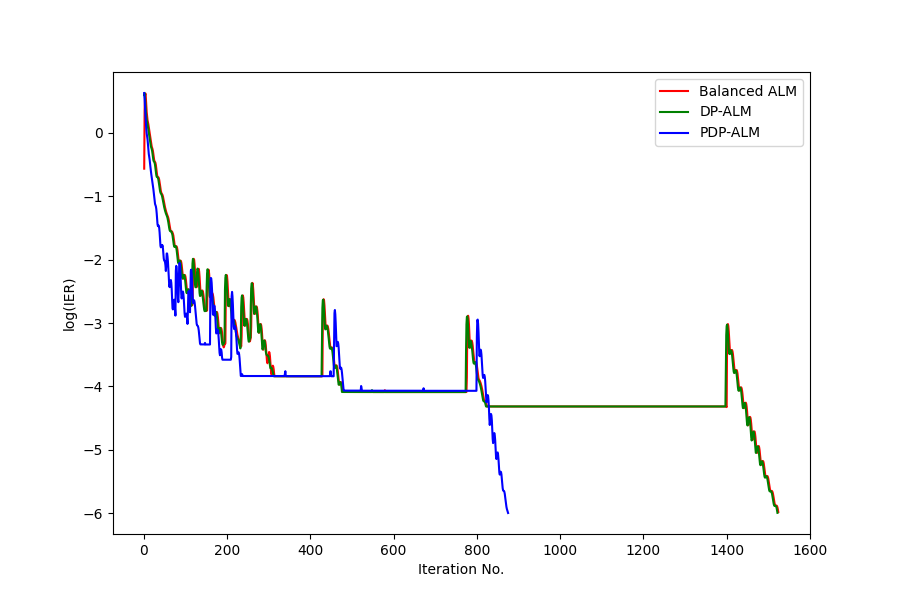}
}
\quad
\subfigure[$m \times n=1100\times 2000$]{
\centering
\includegraphics[width=70mm]{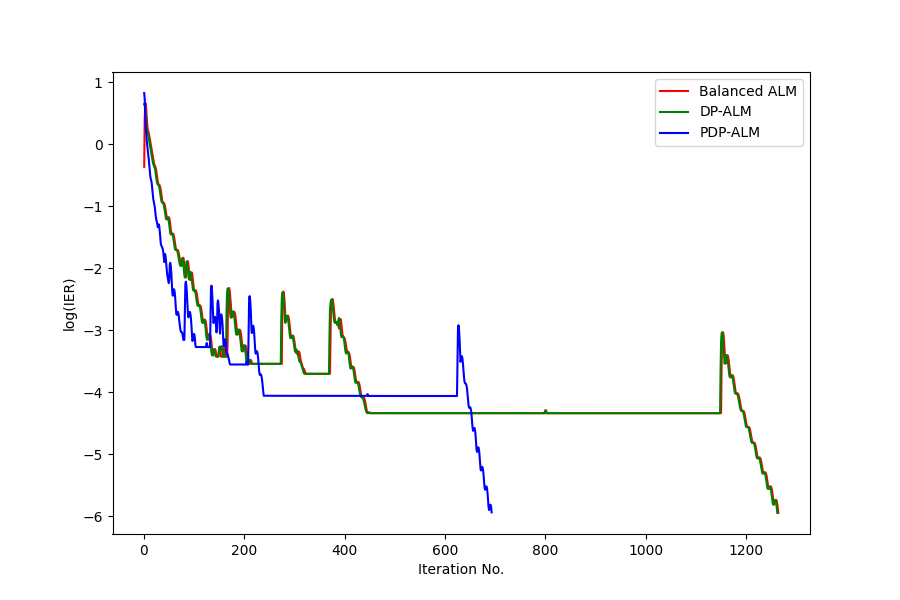}
}
\quad
\subfigure[$m \times n=1300\times 2300$]{
\centering
\includegraphics[width=70mm]{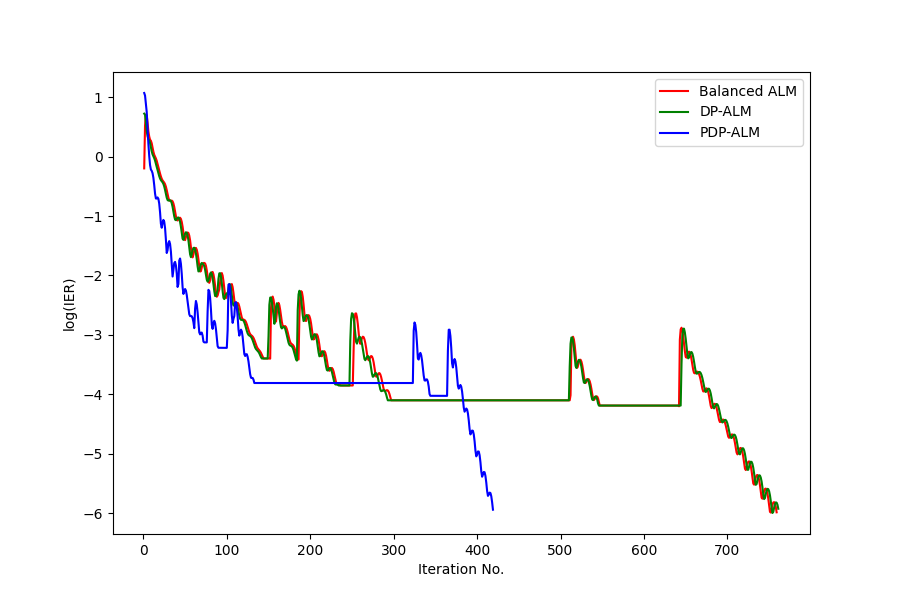}
}
\centering
\caption{Convergence curves of the DPD-ALM, DP-ALM and the balanced ALM compared with iteration number under various dimension of $A$}\label{fig1}
\end{figure}

\end{example}

Then we  show the comparison among the spitting penalty dual-primal balanced ALM (\ref{4.2}) , the PL-ADMM \cite{23yan} and the PIPL-ADMM \cite{24che} for solving  the well-known LASSO model, and the preliminary numerical result shows that the proposed method has a better performance.
\begin{example}
The LASSO model is:
\begin{eqnarray}\label{6.2}
\min_{y}  \frac{1}{2}\|Ay-b\|^{2}+\sigma\|y\|_{1},
\end{eqnarray}
where $\|y\|_{1}:=\Sigma^{n}_{i=1}|y_{i}|$, $A\in \mathbb{R}^{m\times n}$ is a design matrix usually with $m\ll n$, $m$ is the number of date point, $n$ is the number of features, $b\in \mathbb{R}^{m}$ is the response vector and $\sigma>0$ is a regularization parameter.

By a new auxiliary variable $x$, (\ref{6.2}) can be rewritten as the form:

\begin{eqnarray}\label{6.3}
\min\{\frac{1}{2}\|x-b\|^{2}+\sigma\|y\|_{1}\,|\,x-Ay=0, \ \ \ x\in \mathbb{R}^{m}, y\in \mathbb{R}^{n}\}.
\end{eqnarray}

which is a special case of (\ref{4.1}). Then apply the spitting penalty dual-primal balanced ALM (\ref{4.2}) to (\ref{6.3}),  we have:

\begin{eqnarray*}
 \left\{ \begin{array}{ll}
\lambda^{k+1}= \lambda^{k}-\beta_{1}(x-Ay)-\beta_{2}(x-Ay),\\
x^{k+1}= \arg \min  \left\{ \frac{1}{2}\|x-b\|^2-\langle 2\lambda^{k+1}-\lambda^k,x-b \rangle
+\frac{1}{2}\|x-x^k\|_{\beta_1 I^\top I +Q_{1}}^2  \right\}   \\
y^{k+1}= \arg \min  \left\{ \sigma\|y\|_{1}-\langle 2\lambda^{k+1}-\lambda^k,-Ay-b \rangle
+\frac{1}{2}\|y-y^k\|_{\beta_2 A^\top A +Q_{2}}^2  \right\}.
\end{array} \right.
\end{eqnarray*}

In particular, take $Q_1=\tau_1 I-\beta_1 I$ with $\tau_1 >\beta_1 \|I^\top I\|$, $Q_2=\tau_2 I-\beta_2 A^\top A$ with $\tau_2 >\beta_2 \|A^\top A\|$, the iterate could converted to:

\begin{eqnarray}
 \left\{ \begin{array}{ll}{\label{6.4}}
\lambda^{k+1}= \lambda^{k}-\beta_{1}(x-Ay)-\beta_{2}(x-Ay),\\
x^{k+1}= \arg \min  \left\{ \frac{1}{2}\|x-b\|^2+\frac{\tau_1}{2}\|x-x^k-\frac{1}{\tau_1}(2\lambda^{k+1}-\lambda^k)\|^2  \right\} \\
y^{k+1}= \arg \min  \left\{ \sigma\|y\|_{1}+\frac{\tau_2}{2}\|y-y^k-\frac{1}{\tau_2}A^\top(2\lambda^{k+1}-\lambda^k)\|^2 \right\}.
\end{array} \right.
\end{eqnarray}

\begin{table}[htbp]
\centering\caption{The  number of iterations and runtime of the PL-ADMM and the proposed method for solving LASSO model }\label{tab2}
\begin{tabular}{ccccccccccc}\hline
 \multicolumn{1}{l}{}&\multicolumn{2}{l}{PL-ADMM}&\multicolumn{2}{l}{PIPL-ADMM}&\multicolumn{2}{l}{PDP-ALM}\\
$\gamma$  &Iter.1&Time.1&Iter.2&Time.2&Iter.3&Time.3& $\frac{Iter.3}{Iter.1}$& $\frac{Time3}{Time1}$& $\frac{Iter.3}{Iter.2}$& $\frac{Time3}{Time2}$\\\hline
$0.05$   &376	&29.35	&204	&16.16  &101	&3.53	&0.27	&0.12 &0.50 	&0.22\\
$0.10$	&380	&29.63	&417	&33.98  &111	&3.92	&0.29	&0.13 &0.27 	&0.12\\
$0.15$	&384	&30.13	&397	&31.66  &118	&4.15	&0.31	&0.14 &0.30 	&0.13\\
$0.20$	&399	&32.97	&410	&34.58  &123	&4.34	&0.31	&0.13 &0.30 	&0.13\\
$0.25$	&413	&32.18	&409	&33.57  &127	&4.54	&0.31	&0.14 &0.31 	&0.14\\
$0.30$   &409	&31.92	&406	&34.08  &130	&4.56	&0.32	&0.14 &0.32 	&0.13\\
$0.35$   &420	&33.02	&404	&32.64  &132	&4.65	&0.31	&0.14 &0.33 	&0.14\\
$0.40$   &420	&32.87	&397	&32.32  &134	&4.76	&0.32	&0.14 &0.34 	&0.15\\
$0.45$   &431	&33.82	&392	&31.34  &136	&4.78	&0.32	&0.14 &0.35 	&0.15\\
$0.50$   &438	&34.67	&392	&35.86  &137	&4.83	&0.31	&0.14 &0.35 	&0.13\\
$0.55$   &443	&34.69	&395	&31.34  &138	&4.85	&0.31	&0.14 &0.35 	&0.15\\
$0.60$   &452	&35.34	&395	&31.78  &138	&4.87	&0.31	&0.14 &0.35 	&0.15\\
$0.65$   &407	&23.41	&394	&31.38  &138	&3.53	&0.34	&0.15 &0.35 	&0.11\\
$0.70$   &413	&23.74	&400	&32.67  &137	&3.53	&0.33	&0.15 &0.34 	&0.11\\
\hline
\end{tabular}
\end{table}

\begin{figure}[htbp]
\begin{minipage}[t]{7cm}
\centering
\mbox{\resizebox{!}{65mm}{\includegraphics{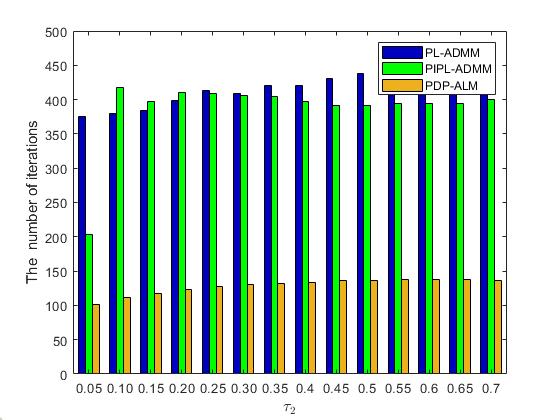}}}\\
\end{minipage}
\hspace{1cm}
\begin{minipage}[t]{7cm}
\centering
 \mbox{\resizebox{!}{65mm}{\includegraphics{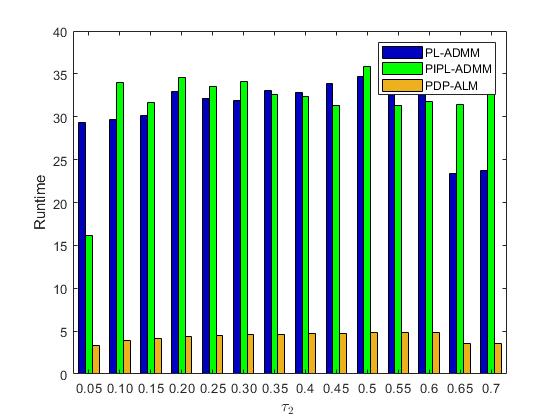}}}\\
\end{minipage}
\caption{The number of iterations and average runtime of splitting PDP-ALM and PL-ADMM  in different $\tau_2$}\label{fig2}
\end{figure}

Then the solutions of the problem (\ref{6.4}) are given respectively by the following explicit form:

\begin{eqnarray*}
 \left\{ \begin{array}{ll}
\lambda^{k+1}= \lambda^{k}-\beta_{1}(x-Ay)-\beta_{2}(x-Ay),\\
x^{k+1}= \frac{1}{\tau_1} [ \tau_1x^k+(2\lambda^{k+1}-\lambda^k)  ] \\
y^{k+1}=S_{\sigma/\tau_2}[y^{k}+\frac{1}{\tau_2}A^\top(2\lambda^{k+1}-\lambda^k)],
\end{array} \right.
\end{eqnarray*}
where $S_{\delta}(t)$ is the soft thresholding operator \cite{24che} defined as (\ref{p6.3}).

We generate the data by the same way in \cite{24che}: we first choose $A_{ij}\sim\mathcal{N}(0,1)$ and then scaled the columns to have unit norm. We use the script \lq sprandn\rq \, to generate a sparse vector $y^{*}$ which have approximately density $=100/n$ non-zeros entries taken from the normal distribution with zero mean and unit variance. We generate $b$ via $b:=Ay^{*}+e$, where $e$ is a small white noise taken from $e\sim\mathcal{N}(0,10^{-3}I)$. We choose the dimension of $A$ is $1050\times3500$. We set the regularization parameter $\sigma$ to $0.1$, and for all tested algorithm the initial point $(x^0,y^0,\lambda^0)$ is randomly generated, and we take the following toned values of parameters for the mentioned experiments:
\begin{enumerate}
\item The spitting penalty dual-primal balanced ALM: $\beta_1 =\beta_2:=\frac{2-\tau_2}{\tau_2|\tau_2-1|}$ and $\tau_1 :=\frac{|\tau_2-1|}{5\beta_1\tau_2}+\frac{4}{5}$;

\item PL-ADMML:$\beta=\frac{2-\tau_2}{\tau_2|\tau_2-1|}$ and $\tau:=\frac{|\tau_2-1|}{5\beta_1\tau_2}+\frac{4}{5}$,
\end{enumerate}
and we all set $\tau_2=0.05, 0.1, 0.15, 0.2, 0.25, 0.3, 0.35, 0.4, 0.45, 0.5, 0.55, 0.6, 0.65, 0.7. $

 The termination criteria is defined by
\begin{eqnarray*}
\max\left\{\|x^{k+1}-x^{k}\|, \|y^{k+1}-y^{k}\|, \|\lambda^{k+1}-\lambda^{k}\| \right\} < 10^{-10}
\end{eqnarray*}

Table \ref{tab2} lists the  number of iterations and runtime in seconds respectively of the the PL-ADMM and the spitting penalty dual-primal balanced ALM for solving the LASSO model with different parameter $\tau_2$. From the numerical experimental result, it is clearly that compared with the PL-ADMM under different parameter $\tau_2$, the proposed method has much better performs both in the number of iterations and runtime. To further visualize the numerical results, we also plot the iterations results in terms of the various parameters $\tau_2$ in Figure \ref{fig2}.
\end{example}

\section{Conclusions}

 This paper have proposed a penalty dual-primal augmented lagrangian method for solving convex minimization problems under linear equality or inequality constraints, and two extensions to solve the multiple-block separable convex programming problems. The global convergence and sub-linear convergence rate of the proposed methods have been establish in the lens of variational analysis. Furthermore, the numerical test on the basic pursuit problem and the lasso model demonstrate that the proposed method has better performance compared with the dual-primal ALM and the balanced ALM. This work may enhance the rich literature of the most recent balanced ALM.

\section*{ Acknowledgments.}

This paper was partially supported by  the Youth Project of Science and Technology Research Program of Chongqing Education Commission of China (No. KJQN202201802), the
the Natural Science Foundation of China (No. 12071379),
 the Natural Science Foundation of Chongqing (No. cstc2021jcyj-msxmX0925, cstc2022ycjh-bgzxm0097).

\end{document}